\documentclass[reqno]{ws-jktr}

\usepackage{graphicx}
\usepackage{amscd}
\usepackage{amsmath, amssymb}
\usepackage{graphics}
\usepackage{graphicx}
\usepackage{caption}
\usepackage{subcaption}
\usepackage{epsfig}
\usepackage{xcolor}
\usepackage{soul}
\usepackage{tikz}


\newtheorem{question}[theorem]{Question}
\newcommand{\red}{\textcolor{red}}

\begin{document}

\title{Matrix representations of the twisted virtual braid group and its extensions}

\author{Mohamad N. Nasser}

\address{Department of Mathematics and Computer Science\\
         Beirut Arab University\\ m.nasser@bau.edu.lb}
         
\author{Vaibhav Keshari, Madeti Prabhakar}

\address{Department of Mathematics, Indian Institute of Technology Ropar, Punjab,
India\\ vaibhav.23maz0022@iitrpr.ac.in\\ prabhakar@iitrpr.ac.in}

\maketitle

\begin{abstract}
This paper classifies complex local representations of the twisted virtual braid group, $TVB_2$, into $\mathrm{GL}_3(\mathbb{C})$. It shows that such representations fall into eight types, all of which are unfaithful and reducible to a two-dimensional representation. Further reducibility to a one-dimensional representation is analyzed for specific types. The paper also examines complex homogeneous local representations of $TVB_n$ into $\mathrm{GL}_{n+1}(\mathbb{C})$ for $n \geq 3$, identifying seven unfaithful types. Additionally, complex local representations of the singular twisted virtual braid group, $STVB_2$, into $\mathrm{M}_3(\mathbb{C})$ are classified into thirteen unfaithful types. Finally, the paper demonstrates that not all complex local extensions of $TVB_2$ representations to $STVB_2$ conform to a $\Phi$-type extension.
\end{abstract}

\renewcommand{\thefootnote}{}
\footnote{\textit{Key words and phrases.} Braid Group; Twisted Virtual Braid Group; Single Twisted Virtual Braid Monoid; Faithfulness; Irreducibility.}
\footnote{\textit{Mathematics Subject Classification.} Primary: 20F36, 20F38; Secondary: 57K12.}

\section{Introduction}

The braid group $B_n$ on $n$ strands was introduced by Emil Artin in 1925~\cite{artin1946}. It is defined as an abstract group generated by the elements $\sigma_1, \sigma_2, \ldots, \sigma_{n-1}$, subject to specific relations that model the fundamental topological behavior of braiding. These groups play a crucial role in low-dimensional topology, algebra, and mathematical physics. In 2004, Louis Kauffman and Sofia Lambropoulou~\cite{KauffmanLambropoulou2004} introduced the virtual braid group $VB_n$, which generalizes the classical braid group by incorporating \emph{virtual crossings}. Like $B_n$, it is defined by generators and relations and forms the algebraic foundation of \emph{virtual knot theory}. Building upon this, in 2023, Komal Negi, Madeti Prabhakar, and Seiichi Kamada~\cite{NegiPrabhakarKamada2023} introduced the twisted virtual braid group $TVB_n$, which further extends $VB_n$ by including new generators and relations that model \emph{twists} in virtual braids. This provides a more nuanced algebraic structure capable of representing additional knot-theoretic phenomena.\vspace{0.1cm}

Earlier, in 1992 and 1993, John Baez~\cite{Baez1992} and Joan Birman~\cite{Birman1993} separately introduced the singular braid monoid $SM_n$, a generalization of the braid group that allows \emph{singular crossings}, interpreted as collisions of strands. This structure is again presented in terms of generators and relations.
Most recently, in 2025, Madeti Prabhakar and Komal Negi~\cite{Pra2025} introduced the singular twisted virtual braid monoid $STVB_n$, which unifies the ideas of singularity and twisting in the context of virtual braids. They also defined a group structure denoted by $STVG_n$, into which $STVB_n$ can be embedded. This new algebraic object offers a rich framework for studying generalizations of braids in virtual and singular settings.\vspace{0.1cm}

The Lawrence--Krammer--Bigelow representation of $B_n$ has been proven faithful for all $n$, establishing the linearity of the braid group. Yu.~Mikhalchishina~\cite{Mik2013} introduced \emph{local linear representations} of $B_3$ and \emph{homogeneous 2-local representations} of $B_n$ for $n \geq 3$. These results were extended by Taher~Mayassi and Mohamad~Nasser~\cite{May2025}, who classified all \emph{homogeneous 3-local representations} of the braid group $B_n$ for $n \geq 4$.\vspace{0.1cm}

In~\cite{Bar2024}, Valeriy Bardakov, Nafaa Chibili, and Tatyana Kozlovskaya introduced the concept of \emph{$\Phi$-type extensions} of representations of $B_n$ to $SM_n$. Mohamad Nasser studied the faithfulness of these representations in some cases \cite{Nas2024}. This idea was further developed by Madeti~Prabhakar and Komal~Negi~\cite{Pra2025}, who extended it to the twisted virtual setting, from $TVB_n$ to $STVB_n$. The well-known Birman representation, which
was shown to be faithful by Luis Paris~\cite{Paris2001} —is a special case of a $\Phi$-type extension of a representation of $B_n$ to $SM_n$, which confirms that $SM_n$ is linear. To determine whether the more general structures $TVB_n$, $STVB_n$, and $STVG_n$ are linear, it is essential to construct and study their representations. This paper addresses this problem by exploring the complex local and homogeneous local representations of these generalized braid structures and examining their properties such as faithfulness and irreducibility.\vspace{0.1cm}

We begin in Section 2 by presenting the main definitions and propositions needed throughout the paper. In Section 3, we study all \emph{complex local representations} of $TVB_2$ and determine whether they are faithful and irreducible. This section also introduces \emph{complex homogeneous local representations} of $TVB_n$ for $n \geq 3$ and analyzes their key properties. Section 4 focuses on \emph{complex local representations of $STVB_2$}, evaluating their faithfulness and irreducibility and posing several open questions for future work. Finally, in Section 5, we define \emph{$\Phi$-type extensions} of complex local representations of $TVB_2$ to $STVB_2$ and discuss their structural and representational implications.

\section{Main Definitions}

\vspace*{0.1cm}

\noindent
The braid group $B_n$~\cite{artin1946}, $n\geq2$, on $n$ strands can be defined as a group generated by $\sigma_1$, $\sigma_2$,$\dots$, $\sigma_{n-1} $ with the defining relations 
\begin{align}
\sigma_i\sigma_{i+1}\sigma_i &= \sigma_{i+1}\sigma_i\sigma_{i+1}, 
& i = 1,\ldots,n-2,\\
\sigma_i\sigma_j &= \sigma_j\sigma_i, 
& |i-j| \ge 2.
\end{align}
\begin{figure}[h]
\centering
    \begin{subfigure}[c]{0.4\textwidth}
        \centering
     \includegraphics[width=0.8\textwidth]{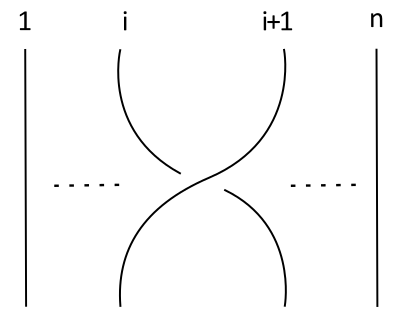}
     \caption{$\sigma_i$}
     \label{fig:cl1}
 \end{subfigure}
 ~
\begin{subfigure}[c]{0.4\textwidth}
            \centering
            \includegraphics[width=0.8\textwidth]{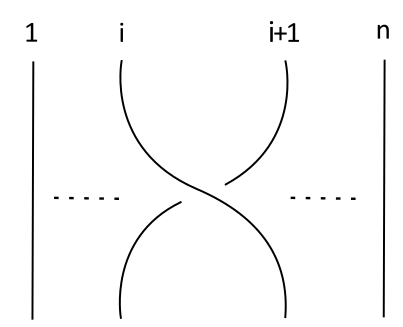}
         \caption{$\sigma^{-1}_i$} 
    \end{subfigure}
    \caption{~Geometrical interpretation of $\sigma_i$ and $\sigma^{-1}_i$\red{.}}
    \label{fig:cl2}
\end{figure}

The virtual braid group $VB_n$ defined in \cite{KauffmanLambropoulou2004} is an extension of the classical braid group $B_n$ generated by the generators of $B_n$ and new generators, \( \rho_1, \rho_2, \dots, \rho_{n-1},\) which satisfy the  relations (2.1)-(2.2) along with the following new relations:
\begin{align}
\rho_i^2 &= e, 
& i = 1,2,\ldots,n-1,\\
\rho_i \rho_j &= \rho_j \rho_i, 
& |i-j| \geq 2,\\
\rho_i \rho_{i+1} \rho_i &= \rho_{i+1} \rho_i \rho_{i+1}, 
& i = 1,2,\ldots,n-2,\\
\sigma_i \rho_j &= \rho_j \sigma_i, 
& |i-j| \geq 2,\\
\rho_i \rho_{i+1}\sigma_i &= \sigma_{i+1}\rho_i\rho_{i+1}, 
& i = 1,2,\ldots,n-2.
\end{align}
\begin{figure}[h]
\centering
    \begin{subfigure}[c]{0.4 \textwidth}
        \centering
     \includegraphics[width=0.8\textwidth]{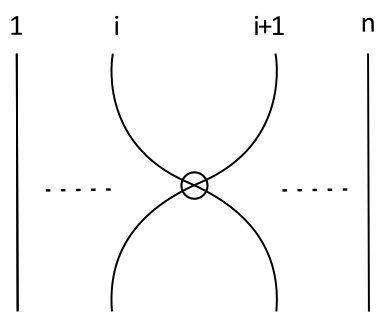}
     \caption{$\rho_i$}
     \label{fig:vir}
 \end{subfigure}
 ~
\begin{subfigure}[c]{0.4\textwidth}
            \centering
            \includegraphics[width=0.7\textwidth]{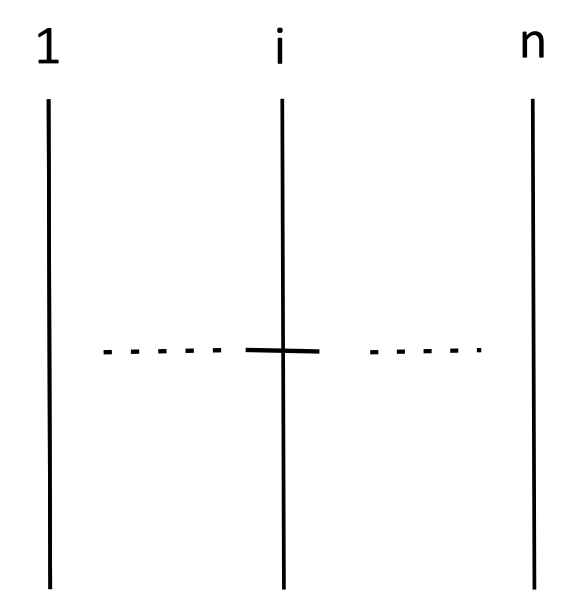}
         \caption{$\gamma_i$} 
         
    \end{subfigure}
    \vspace*{8pt}
    \caption{~Geometrical interpretation of $\rho_i$ and $\gamma_i$.}
    \label{virtual and twisted}
\end{figure}

The twisted virtual braid group $TVB_n$ defined in \cite{NegiPrabhakarKamada2023} is a group generated by the generators of $VB_n$ and new generators, $\gamma_1, \gamma_2, \dots, \gamma_n$, which satisfy the relations of $VB_n$ (2.1)-(2.7) along with the relations:
\begin{align}
\gamma_i^2 &= e, 
& i = 1,2,\ldots,n,\\
\gamma_i \gamma_j &= \gamma_j \gamma_i, 
& i,j = 1,2,\ldots,n,\\
\gamma_j \rho_i &= \rho_i \gamma_j, 
& |i-j| \geq 2,\\
\gamma_j \sigma_i &= \sigma_i \gamma_j, 
& |i-j| \geq 2,\\
\rho_i \gamma_i &= \gamma_{i+1} \rho_i, 
& i = 1,2,\ldots,n-1,\\
\rho_i \sigma_i \rho_i &= \gamma_{i+1} \gamma_i \sigma_i \gamma_i \gamma_{i+1}, 
& i = 1,2,\ldots,n-1.
\end{align}

The singular braid monoid $SM_n$ defined in \cite{Baez1992,Birman1993} is a monoid generated by the elements $\sigma_i, \sigma^{-1}_i, \tau_i,~ i=1,2,\dots, n-1$, where the elements $\sigma_i$~and$~\sigma^{-1}_i$ are the generators of the braid group $B_n$ satisfying the relations (2.1)-(2.2) and the generators $\tau_i$ satisfy the defining relations:
\begin{align}
\tau_i\tau_j &= \tau_j\tau_i, 
& |i-j| \geq 2,
\end{align}
and the mixed relations
\begin{align}
\tau_i\sigma_j &= \sigma_j\tau_i, 
& |i-j| \geq 2,\\
\tau_i\sigma_i &= \sigma_i\tau_i, 
& i = 1,2,\ldots,n-1,\\
\sigma_i\sigma_{i+1}\tau_i &= \tau_{i+1}\sigma_i\sigma_{i+1}, 
& i = 1,2,\ldots,n-2,\\
\sigma_{i+1}\sigma_i\tau_{i+1} &= \tau_i\sigma_{i+1}\sigma_i, 
& i = 1,2,\ldots,n-2.
\end{align}
The singular braid group $SB_n$ defined in \cite{Roger1998} has the same generators as $SM_n$ with an additional generator $\bar{\tau_i}, 1\leq i<n,$ satisfying the defining relations: 
    \begin{enumerate}
      \item  The same relations as in $SM_n$, i.e., (2.1)-(2.2) and (2.14)-(2.18), along with the additional relations obtained by substituting $\bar{\tau_i}$ for $\tau_i$ in each relevant relation of $SM_n$, i.e.,(2.14)-(2.18),
      \item $\tau_i\bar{\tau_i}=\bar{\tau_i}\tau_i=e, \quad i=1,2,\ldots, n-1.$
    \end{enumerate} 
\begin{figure}[h]
\centering
    \begin{subfigure}[c]{0.4\textwidth}
        \centering
     \includegraphics[width=0.8\textwidth]{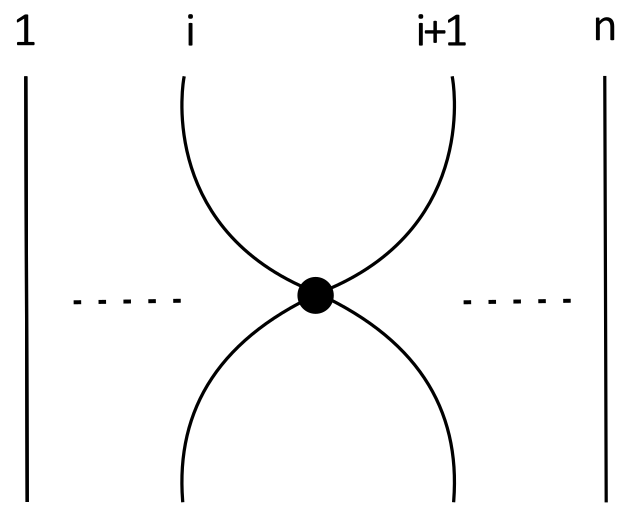}
     \caption{$\tau_i$}
     \label{fig:sing}
 \end{subfigure}
 ~
\begin{subfigure}[c]{0.4\textwidth}
            \centering
            \includegraphics[width=0.8\textwidth]{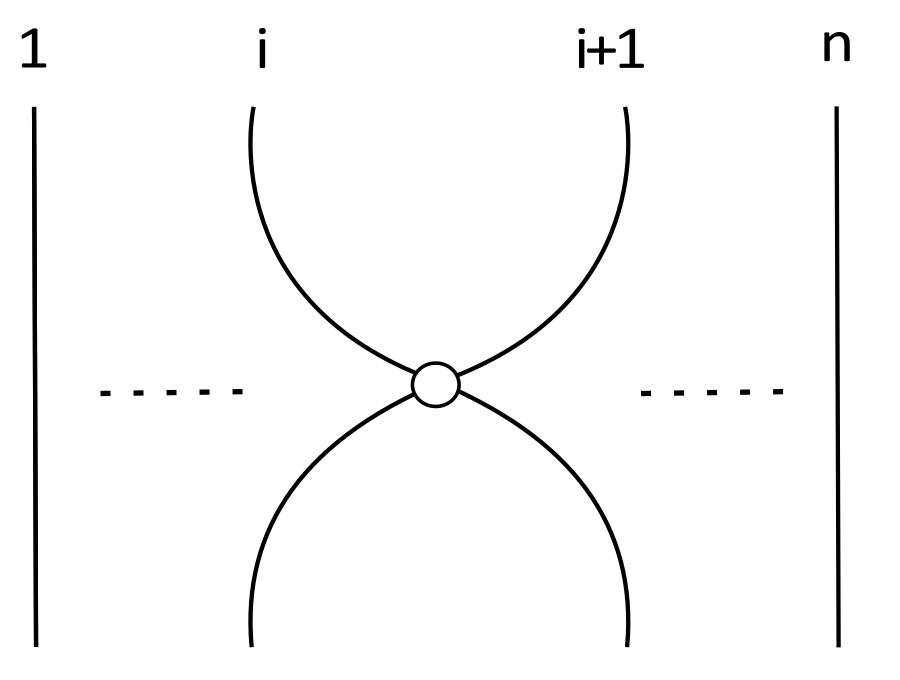}
         \caption{$\bar{\tau_i}$} 
    \end{subfigure}
    \caption{~~Geometrical interpretation of $\tau_i$ and $\bar{\tau_i}$ }
\end{figure}
   
The singular twisted virtual braid monoid $STVB_n$ is generated by standard generators, $\sigma^{\pm 1}_i, \rho_i, \tau_i ~ (i=1,2,\cdots,n-1)$ and $\gamma_i ~(i=1,2,\cdots,n)$ with the defining relations (2.1)-(2.18) and 
\begin{align}
\tau_i\rho_j &= \rho_j\tau_i, 
& |i-j|\geq 2,\\
\rho_i\tau_{i+1}\rho_i &= \rho_{i+1}\tau_i\rho_{i+1}, 
& i = 1,2,\ldots,n-2,\\
\tau_i\gamma_j &= \gamma_j\tau_i, 
& |i-j|\geq 2,\\
\rho_i\tau_i\rho_i &= \gamma_{i+1}\gamma_i\tau_i\gamma_i\gamma_{i+1}, 
& i = 1,2,\ldots,n-1.
\end{align}
  
The singular twisted virtual braid group, $STVG_n$, defined in \cite{Pra2025} is a group generated by the same generators as $STVB_n$, together with $\bar{\tau_i}, 1\leq i<n,$ satisfying the defining relations:
  \begin{enumerate}
  \item  The same relations as in $STVB_n$, i.e.,(2.1)-(2.22), along with the additional relations obtained by substituting $\bar{\tau_i}$ for $\tau_i$ in each relevant relations of $STVB_n$, i.e.,(2.14)-(2.22),
  \item $\tau_i\bar{\tau_i}=\bar{\tau_i}\tau_i=e, \quad i=1,2,\ldots, n-1,$
  \item $\tau_j\bar{\tau_i}=\bar{\tau_i}\tau_j, \quad |i-j|\geq 2.$
  \end{enumerate}

It is observed that the monoid \( SM_n \) is embedded in \( SB_n \), and the monoid \( STVB_n \) is embedded in \( STVG_n \). A thorough understanding of the representations of these groups is paramount for assessing their linearity—that is, whether they admit faithful representations into general linear groups. To lay the groundwork, we first define group representations and discuss their fundamental properties. Following this, we explore several types of representations pertinent to our study. We then introduce the prominent Burau representation of the braid group \( B_n \), detailing its definition and summarizing its key findings.

\begin{definition}\cite{Vinberg} Let \( G \) be a group. A \emph{representation} of \( G \) of degree \( n \) is a group homomorphism
\[
\rho : G \longrightarrow \mathrm{GL}_n(\mathbb{C}),
\]
where \( \mathrm{GL}_n(\mathbb{C}) \) is the group of all invertible \( n \times n \) matrices with complex entries. 
\end{definition}

\begin{definition}\cite{Vinberg} A representation \( \rho : G \longrightarrow \mathrm{GL}_n(\mathbb{C}) \) is said to be \emph{irreducible} if there is no nontrivial proper subspace \( W \subset \mathbb{C}^n \) such that \( \rho(g)(w) \in W \) for all \( g \in G \) and \( w \in W \). 
\end{definition}

\begin{definition}\cite{Vinberg} A representation \( \rho : G \to \mathrm{GL}_n(\mathbb{C}) \) is said to be \emph{faithful} if it is injective; that is, \( \ker(\rho) = \{e\} \), where \( e \) is the identity element of \( G \).
\end{definition}

\begin{definition}\cite{Bur1936} \label{defBurau}
For $t$ indeterminate, the Burau representation $\mathcal{B}: B_n\longrightarrow \mathrm{GL}_n(\mathbb{Z}[t^{\pm 1}])$, for $n\geq 2$, is the representation given as follows.
$$\mathcal{B}(\sigma_i)= \left( \begin{array}{c|@{}c|c@{}}
   \begin{matrix}
     I_{i-1} 
   \end{matrix} 
      & \textbf{0} & \textbf{0} \\
      \hline
    \textbf{0} &\hspace{0.2cm} \begin{matrix}
   	1-t \ & t \ \\
   	1 \ & 0 \ \\
\end{matrix}  & \textbf{0}  \\
\hline
\textbf{0} & \textbf{0} & I_{n-i-1}
\end{array} \right) \hspace*{0.2cm} \text{for} \hspace*{0.2cm} 1\leq i\leq n-1.$$ 
\end{definition}
The faithfulness of the Burau representation varies with $n$. 
For $n\leq 3$, it's proven to be faithful \cite{Bir1975}, while for $n\geq 5$ it's shown to be unfaithful (\cite{Moo1991}, \cite{Long1992} and \cite{Big1999}). The specific case of $n=4$ is still unresolved. Furthermore, concerning its irreducibility, the Burau representation has been established as reducible for $n\geq 3$ (\cite{For1996}).

\vspace*{0.1cm}

Now, we introduce the definition and main results of another famous representation of $B_n$, which is the Lawrence-Krammer-Bigelow (LKB) representation.

\begin{definition} \cite{Law1990,Kram2002,Big2001}. \label{defLaw}
Let $V$ be a free $R$-module with basis $\{x_{i,j}, {1 \leq i < j \leq n}\}$, where $R=\mathbb{Z}[t^{\pm 1},q^{\pm 1}]$, the ring of Laurent polynomials on two variables $q$ and $t$. The LKB representation, $\mathcal{K}: B_n\longrightarrow \mathrm{GL}_{\frac{n(n-1)}{2}}(\mathbb{Z}[t^{\pm 1},q^{\pm 1}])$, for $n\geq 3$, is given by acting on the generators $\sigma_k, 1\leq k \leq n-1$, as follows.
\[
\mathcal{K}(\sigma_k)(x_{i,j}) =
\begin{cases}
tq^2 x_{k,k+1}, & i=k<k+1=j,\\[4pt]
(1-q)x_{i,k} + qx_{i,k+1}, & i<k=j,\\[4pt]
x_{i,k} + tq^{k-i+1}(q-1)x_{k,k+1}, & i<k<k+1=j,\\[4pt]
tq(q-1)x_{k,k+1} + qx_{k+1,j}, & i=k<k+1<j,\\[4pt]
x_{k,j} + (1-q)x_{k+1,j}, & k<i=k+1<j,\\[4pt]
x_{i,j}, & i<j<k \ \text{or}\ k+1<i<j,\\[4pt]
x_{i,j} + tq^{k-i}(q-1)^2 x_{k,k+1}, & i<k<k+1<j.
\end{cases}
\]
\end{definition}
For the faithfulness of the LKB representation, it was shown in \cite{Big2001} and \cite{Kram2002} that it is faithful for all $n\geq 2$. Regarding irreducibility, the LKB representation is shown to be irreducible for all $n\geq 3$ in \cite{Lev2010}.\\

In what follows, we give the concept of local representations of any group $G$ with finite number of generators.

\begin{definition}
Let $t$ be indeterminate and let $G$ be a group with generators $g_1,g_2,\ldots,g_{n-1}$. A representation $\theta: G \longrightarrow \mathrm{GL}_{m}(\mathbb{Z}[t^{\pm 1}])$ is said to be local if it is of the form
$$\theta(g_i) =\left( \begin{array}{c|@{}c|c@{}}
   \begin{matrix}
     I_{i-1} 
   \end{matrix} 
      & \textbf{0} & \textbf{0} \\
      \hline
    \textbf{0} &\hspace{0.2cm} \begin{matrix}
   		M_i \ 
   		\end{matrix}  & \textbf{0}  \\
\hline
\textbf{0} & \textbf{0} & I_{n-i-1}
\end{array} \right) \hspace*{0.2cm} \text{for} \hspace*{0.2cm} 1\leq i\leq n-1,$$ 
where $M_i \in \mathrm{GL}_k(\mathbb{Z}[t^{\pm 1}])$ with $k=m-n+2$ and $I_r$ is the $r\times r$ identity matrix. The local representation is said to be homogeneous if all the matrices \(M_i, 1 \leq i \leq n-1\), are equal.
\end{definition}

We can see that the Burau representation defined in Definition \ref{defBurau} is a homogeneous local representation, while LKB representation defined in Definition \ref{defLaw} is not local.\\

We remark that the concept of local representations could be extended if $G$ is a group of $k(n-1)$ generators, where the generators of $G$ consist of $k$ families and each family consists of $n-1$ members. 
The definition below is given for \( k = 2 \).
 
\begin{definition}
Let $t$ be indeterminate and let $G$ be a group with $2$ families of generators $g_1,g_2,\ldots,g_{n-1}$ and $h_1,h_2,\ldots,h_{n-1}$. A local representation $\theta: G \longrightarrow \mathrm{GL}_{m}(\mathbb{Z}[t^{\pm 1}])$ is a representation of the form
$$\theta(g_i) =\left( \begin{array}{c|@{}c|c@{}}
   \begin{matrix}
     I_{i-1} 
   \end{matrix} 
      & \textbf{0} & \textbf{0} \\
      \hline
    \textbf{0} &\hspace{0.2cm} \begin{matrix}
   		M_i \ 
   		\end{matrix}  & \textbf{0}  \\
\hline
\textbf{0} & \textbf{0} & I_{n-i-1}
\end{array} \right) \text{ and \ } \theta(h_i) =\left( \begin{array}{c|@{}c|c@{}}
   \begin{matrix}
     I_{i-1} 
   \end{matrix} 
      & \textbf{0} & \textbf{0} \\
      \hline
    \textbf{0} &\hspace{0.2cm} \begin{matrix}
   		N_i \ 
   		\end{matrix}  & \textbf{0}  \\
\hline
\textbf{0} & \textbf{0} & I_{n-i-1}
\end{array} \right) $$
for $1\leq i\leq n-1,$ where $M_i,N_i \in \mathrm{GL}_k(\mathbb{Z}[t^{\pm 1}])$ with $k=m-n+2$ and $I_r$ is the $r\times r$ identity matrix. In this case, $\theta$ is homogeneous if all the matrices $M_i$'s are equal and all the matrices $N_i$'s are equal.
\end{definition}

In the following definition, we aim to extend the concept of local representations to $TVB_n$. As we mentioned previously, $TVB_n$ has three families of generators: $\sigma_1,\sigma_2,\ldots, \sigma_{n-1}$, $\rho_1,\rho_2,\ldots, \rho_{n-1}$, and  $\gamma_1,\gamma_2,\ldots, \gamma_{n}$. It is evident that the number of generators in the third family exceeds the number of generators in each of the first two families by $1$. Therefore, we introduce the concept of local representations for $TVB_n$ as follows.

\begin{definition}
Let $t$ be indeterminate. A local representation $\theta: TVB_n \longrightarrow \mathrm{GL}_{m}(\mathbb{Z}[t^{\pm 1}])$ is a representation that acts on the three families of generators of $TVB_n$ as follows.
$$\theta(\sigma_i) =\left( \begin{array}{c|@{}c|c@{}}
   \begin{matrix}
     I_{i-1} 
   \end{matrix} 
      & \textbf{0} & \textbf{0} \\
      \hline
    \textbf{0} &\hspace{0.2cm} \begin{matrix}
   		S_i \ 
   		\end{matrix}  & \textbf{0}  \\
\hline
\textbf{0} & \textbf{0} & I_{n-i}
\end{array} \right),$$ 
$$\theta(\rho_i) =\left( \begin{array}{c|@{}c|c@{}}
   \begin{matrix}
     I_{i-1} 
   \end{matrix} 
      & \textbf{0} & \textbf{0} \\
      \hline
    \textbf{0} &\hspace{0.2cm} \begin{matrix}
   		R_i \ 
   		\end{matrix}  & \textbf{0}  \\
\hline
\textbf{0} & \textbf{0} & I_{n-i}
\end{array} \right),$$ 
and
$$\theta(\gamma_j) =\left( \begin{array}{c|@{}c|c@{}}
   \begin{matrix}
     I_{j-1} 
   \end{matrix} 
      & \textbf{0} & \textbf{0} \\
      \hline
    \textbf{0} &\hspace{0.2cm} \begin{matrix}
   		G_j \ 
   		\end{matrix}  & \textbf{0}  \\
\hline
\textbf{0} & \textbf{0} & I_{n-j}
\end{array} \right)$$
for $1\leq i\leq n-1$ and  $1\leq j\leq n,$ where $S_i,R_i,G_j \in \mathrm{GL}_k(\mathbb{Z}[t^{\pm 1}])$ with $k=m-n+1$ and $I_r$ is the $r\times r$ identity matrix. In this case, $\theta$ is homogeneous if all the matrices $S_i$'s are equal, all the matrices $R_i$'s are equal, and all the matrices $G_j$'s are equal.
\end{definition}

In the following proposition, we introduce the concept of $\Phi$-type extensions of representations of the braid group $B_n$ to the singular braid monoid $SM_n$, given by Bardakov, Chbili, and Kozlovskaya in \cite{Bar2024}. 
 
\begin{proposition} \label{Phi}
Let $\phi: B_n \longrightarrow G_n$ be a representation of $B_n$ to a group $G_n$ and let $\mathbb{K}$ be a field with $a,b,c \in \mathbb{K}$. Then, the map $\Phi_{a,b,c}:SM_n\longrightarrow \mathbb{K}[G_n]$ which acts on the generators of $SM_n$ by the following rules
$$\Phi_{a,b,c}(\sigma_i^{\pm 1})=\phi(\sigma_i^{\pm 1}),$$  
$$\Phi_{a,b,c}(\tau_i)=a\phi(\sigma_i)+b\phi(\sigma_i^{-1})+ce,\ i=1,2,\ldots,n-1,$$
where $e$ is a neutral element of $G_n$, defines a representation of $SM_n$ to $\mathbb{K}[G_n]$.
\end{proposition}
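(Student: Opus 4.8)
The plan is to verify that the proposed map $\Phi_{a,b,c}$ respects every defining relation of the singular braid monoid $SM_n$, since $\phi$ is already a homomorphism on the braid subgroup and the generators $\sigma_i^{\pm 1}$ are sent to $\phi(\sigma_i^{\pm 1})$ inside the group algebra $\mathbb{K}[G_n]$. Because $\Phi_{a,b,c}$ is defined on generators, the only thing to check is well-definedness: each relation (2.1)--(2.2) and (2.14)--(2.18) must be preserved. Relations (2.1) and (2.2) involve only the $\sigma_i$, and hold automatically since $\phi$ is a representation of $B_n$ and the embedding of $G_n$ into $\mathbb{K}[G_n]$ is multiplicative. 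So the real work concerns the four families of mixed relations governing the singular generators $\tau_i$.

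First I would substitute $\Phi_{a,b,c}(\tau_i) = a\phi(\sigma_i) + b\phi(\sigma_i^{-1}) + c\,e$ into each relation and expand using linearity of multiplication in $\mathbb{K}[G_n]$. For the far-commutativity relations (2.14) $\tau_i\sigma_j = \sigma_j\tau_i$ and (2.15)-type commuting of distant $\tau$'s, I would use that $|i-j|\geq 2$ forces $\phi(\sigma_i)$ and $\phi(\sigma_j)$ to commute (image of relation (2.2)), and the central term $c\,e$ commutes with everything; each of the three summands then commutes past $\phi(\sigma_j)$ termwise. For relation (2.16), $\tau_i\sigma_i = \sigma_i\tau_i$, I would note that $\phi(\sigma_i)$, $\phi(\sigma_i^{-1})$, and $e$ all commute with $\phi(\sigma_i)$, so the linear combination does as well; this is the easiest case.

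The substantive cases are the braid-like relations (2.17) $\sigma_i\sigma_{i+1}\tau_i = \tau_{i+1}\sigma_i\sigma_{i+1}$ and (2.18) $\sigma_{i+1}\sigma_i\tau_{i+1} = \tau_i\sigma_{i+1}\sigma_i$. Expanding (2.17) gives three terms on each side: on the left, $a\,\phi(\sigma_i\sigma_{i+1}\sigma_i) + b\,\phi(\sigma_i\sigma_{i+1}\sigma_i^{-1}) + c\,\phi(\sigma_i\sigma_{i+1})$, and on the right, $a\,\phi(\sigma_{i+1}\sigma_i\sigma_{i+1}) + b\,\phi(\sigma_{i+1}^{-1}\sigma_i\sigma_{i+1}) + c\,\phi(\sigma_i\sigma_{i+1})$. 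The $a$-terms match by the braid relation (2.1), and the $c$-terms are identical; the plan is to check that the $b$-terms agree by verifying the braid-group identity $\sigma_i\sigma_{i+1}\sigma_i^{-1} = \sigma_{i+1}^{-1}\sigma_i\sigma_{i+1}$, which follows from (2.1) by rearranging $\sigma_i\sigma_{i+1}\sigma_i = \sigma_{i+1}\sigma_i\sigma_{i+1}$. Applying $\phi$ and comparing term by term then closes the case, and (2.18) is handled by the mirror computation. The main obstacle, such as it is, is precisely this matching of the $b$-coefficient terms: one must confirm that the conjugation-type identities hold as \emph{relations in $B_n$} (so that $\phi$ carries them to equalities in $G_n$), rather than merely as formal symbol manipulations, and that no cross-terms arise from the products since each $\tau$ contributes a single linear factor. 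Once all relations (2.1)--(2.2) and (2.14)--(2.18) are verified, the universal property of the monoid presentation guarantees that $\Phi_{a,b,c}$ extends to a well-defined monoid homomorphism $SM_n \to \mathbb{K}[G_n]$, completing the proof.
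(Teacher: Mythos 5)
Your argument is correct and complete: the paper itself gives no proof of this proposition (it is quoted from Bardakov--Chbili--Kozlovskaya \cite{Bar2024}), and your verification --- expanding $\Phi_{a,b,c}(\tau_i)$ by linearity in $\mathbb{K}[G_n]$, checking the far-commutativity and $\tau_i\sigma_i$ relations termwise, and reducing the $b$-terms of (2.17)--(2.18) to the conjugation identities $\sigma_i\sigma_{i+1}\sigma_i^{-1}=\sigma_{i+1}^{-1}\sigma_i\sigma_{i+1}$ and its mirror, both consequences of the braid relation in $B_n$ --- is exactly the standard argument for this result. The only blemish is cosmetic: you swap the labels of (2.14) ($\tau_i\tau_j=\tau_j\tau_i$) and (2.15) ($\tau_i\sigma_j=\sigma_j\tau_i$), but you do verify both, so nothing is missing.
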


In \cite{Pra2025}, Prabhakar and Negi extended the concept of $\Phi$-type extensions given in Proposition \ref{Phi} to $TVB_n$. They proved, in a similar way, that if $\phi: TVB_n \longrightarrow G_n$ is a representation of $TVB_n$ to a group $G_n$ and $\mathbb{K}$ is a field with $a,b,c \in \mathbb{K}$, then, the map $\Phi_{a,b,c}: STVB_n\longrightarrow \mathbb{K}[G_n]$ which acts on the generators of $STVB_n$ by the rules
$$\Phi_{a,b,c}(\sigma_i^{\pm 1})=\phi(\sigma_i^{\pm 1}),\ \Phi_{a,b,c}(\rho_i)=\phi(\rho_i),\ \Phi_{a,b,c}(\gamma_j)=\phi(\gamma_j) \text{ and }$$ $$\Phi_{a,b,c}(\tau_i)=a\phi(\sigma_i)+b\phi(\sigma_i^{-1})+ce,\ i=1,2,\ldots,n-1, j=1,2,\ldots, n,$$  where $e$ is a neutral element of $G_n$, defines a representation of $STVB_n$ to $\mathbb{K}[G_n]$.

\section{On Complex Local Representations of $TVB_n$}

\vspace*{0.1cm}

In this section, we determine the forms of all complex homogeneous local representations of $TVB_n$ for all $n\geq 2$. In addition, we study the irreducibility and the faithfulness for these representations. In the following theorem, we begin with \( n = 2 \).
\begin{theorem} \label{localTVB2}
Let $\zeta: TVB_2 \longrightarrow \mathrm{GL}_3(\mathbb{C})$ be a complex local representation of $TVB_2$. Then, $\zeta$ is equivalent to one of the following eight representations.
\begin{itemize}
\item[(1)] $\zeta_1: TVB_2 \longrightarrow \mathrm{GL}_3(\mathbb{C})$ such that
$$\zeta_1(\sigma_1) =\left( \begin{array}{@{}c@{}}
  \begin{matrix}
   		d\ & b\ & 0\\\
   		\frac{b}{x^2}\ & d\ & 0 \\
   		0\ & 0\ & 1
   		\end{matrix}
\end{array} \right),~ \zeta_1(\rho_1) =\left( \begin{array}{@{}c@{}}
 \begin{matrix}
   		0\ & x \ & 0 \\
   		\frac{1}{x} \ & 0\  & 0 \\
   		0 \ & 0\  & 1 
   		\end{matrix}
\end{array} \right),$$
$$\zeta_1(\gamma_1) =\left( \begin{array}{@{}c@{}}
  \begin{matrix}
   		-1\ & 0\ & 0 \\
   		0\  & 1\  & 0 \\
   		0\  & 0\  & 1
   		\end{matrix}
\end{array} \right), \text{ and \ } \zeta_1(\gamma_2) =\left( \begin{array}{@{}c@{}}
 \begin{matrix}
  		1\ & 0\ & 0 \\
   		0\ & -1\ & 0 \\
   		0\  & 0\  & 1 
   		\end{matrix}
\end{array} \right),$$
where $b,d,x \in \mathbb{C}, b^2-d^2x^2\neq 0, x\neq 0$.
\vspace*{0.1cm}
\item[(2)] $\zeta_2: TVB_2 \longrightarrow \mathrm{GL}_3(\mathbb{C})$ such that
$$\zeta_2(\sigma_1) =\left( \begin{array}{@{}c@{}}
  \begin{matrix}
   		\frac{2bw+dx}{x}\ & b\ & 0\\
   		\frac{b-bw^2}{x^2}\ & d\ & 0\\
   		0\ & 0\ & 1
   		\end{matrix}
\end{array} \right),~ \zeta_2(\rho_1) =\left( \begin{array}{@{}c@{}}
 \begin{matrix}
   		w\ & x\ & 0\\
   		\frac{1-w^2}{x}\ & -w\ & 0\\
   		0\ & 0\ & 1
   		\end{matrix}
\end{array} \right), \text{ and \ }~ \zeta_2(\gamma_1)=\zeta_2(\gamma_2)=I_3,$$
where $b,d,w,x \in \mathbb{C}, (d\neq 0 \text{ or } dx\neq \pm b-bw), x\neq 0$.
\vspace*{0.1cm}
\item[(3)] $\zeta_3: TVB_2 \longrightarrow \mathrm{GL}_3(\mathbb{C})$ such that
$$\zeta_3(\sigma_1) =\left( \begin{array}{@{}c@{}}
  \begin{matrix}
   		a\ & b\ & 0\\
   		c\ & d\ & 0\\
   		0\ & 0\ & 1
   		\end{matrix}
\end{array} \right), ~\zeta_3(\rho_1) =\left( \begin{array}{@{}c@{}}
 \begin{matrix}
   		-1\ & 0\ & 0\\
   		0\ & -1\ & 0\\
   		0\ & 0\ & 1
   		\end{matrix}
\end{array} \right), \text{ and \ } \zeta_3(\gamma_1)=\zeta_3(\gamma_2)=I_3,$$
where $a,b,c,d \in \mathbb{C}, ad-bc \neq 0$.
\vspace*{0.1cm}
\item[(4)] $\zeta_4: TVB_2 \longrightarrow \mathrm{GL}_3(\mathbb{C})$ such that
$$\zeta_4(\sigma_1) =\left( \begin{array}{@{}c@{}}
  \begin{matrix}
   		a\ & b\ & 0\\
   		c\ & d\ & 0\\
   		0\ & 0\ & 1
   		\end{matrix}
\end{array} \right),~ \zeta_4(\rho_1) =\left( \begin{array}{@{}c@{}}
 \begin{matrix}
   		1\ & 0\ & 0\\
   		0\ & 1\ & 0\\
   		0\ & 0\ & 1
   		\end{matrix}
\end{array} \right), \text{ and \ } \zeta_4(\gamma_1)=\zeta_4(\gamma_2)=I_3,$$
where $a,b,c,d \in \mathbb{C}, ad-bc \neq 0$.
\vspace*{0.1cm}
\item[(5)] $\zeta_5: TVB_2 \longrightarrow \mathrm{GL}_3(\mathbb{C})$ such that
$$\zeta_5(\sigma_1) =\left( \begin{array}{@{}c@{}}
  \begin{matrix}
   		a\ & 0\ & 0\\
   		c\ & d\ & 0\\
   		0\ & 0\ & 1
   		\end{matrix}
\end{array} \right), ~\zeta_5(\rho_1) =\left( \begin{array}{@{}c@{}}
 \begin{matrix}
   		-1\ & 0\ & 0\\
   		\frac{2c}{d-a}\ & 1\ & 0\\
   		0\ & 0\ & 1
   		\end{matrix}
\end{array} \right), \text{ and \ } \zeta_5(\gamma_1)=\zeta_5(\gamma_2)=I_3,$$
where $a,c,d \in \mathbb{C}, ad\neq 0, a\neq d$.
\vspace*{0.1cm}
\item[(6)] $\zeta_6: TVB_2 \longrightarrow \mathrm{GL}_3(\mathbb{C})$ such that
$$\zeta_6(\sigma_1) =\left( \begin{array}{@{}c@{}}
  \begin{matrix}
   		a\ & 0\ & 0\\
   		c\ & d\ & 0\\
   		0\ & 0\ & 1
   		\end{matrix}
\end{array} \right),~ \zeta_6(\rho_1) =\left( \begin{array}{@{}c@{}}
 \begin{matrix}
   		1\ & 0\ & 0\\
   		\frac{2c}{a-d}\ & -1\ & 0\\
   		0\ & 0\ & 1
   		\end{matrix}
\end{array} \right), \text{ and \ } \zeta_6(\gamma_1)=\zeta_6(\gamma_2)=I_3,$$
where $a,c,d \in \mathbb{C}, ad\neq 0, a\neq d$.
\vspace*{0.1cm}
\item[(7)] $\zeta_{7}: TVB_2 \longrightarrow \mathrm{GL}_3(\mathbb{C})$ such that
$$\zeta_{7}(\sigma_1) =\left( \begin{array}{@{}c@{}}
  \begin{matrix}
   		d\ & 0\ & 0\\
   		0\ & d\ & 0\\
   		0\ & 0\ & 1
   		\end{matrix}
\end{array} \right),~ \zeta_{7}(\rho_1) =\left( \begin{array}{@{}c@{}}
 \begin{matrix}
   		1\ & 0\ & 0\\
   		y\ & -1\ & 0\\
   		0\ & 0\ & 1
   		\end{matrix}
\end{array} \right), \text{ and \ } \zeta_7(\gamma_1)=\zeta_7(\gamma_2)=I_3,$$
where $d,y \in \mathbb{C}, d\neq 0$.
\item[(8)] $\zeta_{8}: TVB_2 \longrightarrow \mathrm{GL}_3(\mathbb{C})$ such that
$$\zeta_{8}(\sigma_1) =\left( \begin{array}{@{}c@{}}
  \begin{matrix}
   		d\ & 0\ & 0\\
   		0\ & d\ & 0\\
   		0\ & 0\ & 1
   		\end{matrix}
\end{array} \right),~ \zeta_{8}(\rho_1) =\left( \begin{array}{@{}c@{}}
 \begin{matrix}
   		-1\ & 0\ & 0\\
   		y\ & 1\ & 0\\
   		0\ & 0\ & 1
   		\end{matrix}
\end{array} \right), \text{ and \ } \zeta_8(\gamma_1)=\zeta_8(\gamma_2)=I_3,$$
where $d,y \in \mathbb{C}, d\neq 0$.
\end{itemize}
\end{theorem}

\begin{proof}
Set 
$$\zeta(\sigma_1) =\left( \begin{array}{@{}c@{}}
  \begin{matrix}
   		a\ & b\ & 0\\
   		c\ & d\ & 0\\
   		0\ & 0\ & 1
   		\end{matrix}
\end{array} \right), ~\zeta(\rho_1) =\left( \begin{array}{@{}c@{}}
 \begin{matrix}
   		w\ & x\ & 0\\
   		y\ & z\ & 0\\
   		0\ & 0\ & 1
   		\end{matrix}
\end{array} \right),$$
$$\zeta(\gamma_1) =\left( \begin{array}{@{}c@{}}
  \begin{matrix}
   		p\ & q\ & 0\\
   		r\ & s\ & 0\\
   		0\ & 0\ & 1
   		\end{matrix}
\end{array} \right), \text{ and \ } \zeta(\gamma_2) =\left( \begin{array}{@{}c@{}}
 \begin{matrix}
  		1 & 0 & 0\\
   		0 & p & q\\
   		0 & r & s
   		\end{matrix}
\end{array} \right),$$
where $a,b,c,d,w,x,y,z,p,q,r,s \in \mathbb{C}$, $ad-bc\neq 0, wz-xy\neq 0, ps-qr\neq 0$. The only relations of the generators of $TVB_2$ are: 
$$\rho_1^2=1,$$ $$\gamma_1^2=\gamma_2^2=1,$$ $$\gamma_1\gamma_2=\gamma_2\gamma_1,$$ $$\rho_1\gamma_1=\gamma_2\rho_1,$$ $$\rho_1\sigma_1\rho_1=\gamma_2\gamma_1\sigma_1\gamma_1\gamma_2.$$ We apply these relations on the images of the generators of $TVB_2$ under the representation $\zeta$ and we get a system of thirty one equations and twelve unknowns. Solving this system gives directly that $q=r=0$ and $s=1$. Substituting these values in the equations we get,  
\begin{equation}\label{eq1}
w^2+xy =1
\end{equation}
\begin{equation}\label{eq2}
x(w+z)=0
\end{equation}
\begin{equation}\label{eq3}
y(w+z)=0
\end{equation}
\begin{equation}\label{eq4}
z^2+xy=1
\end{equation}
\begin{equation}\label{eq5}
p^2=1
\end{equation}
\begin{equation}\label{eq6}
w(1-p)=0
\end{equation}             
\begin{equation}\label{eq7}
z(1-p)=0
\end{equation}                   
\begin{equation}\label{eq8}
-ap^2+w(aw+cx)+(bw+dx)y=0
\end{equation} 
\begin{equation}\label{eq9}
-bp^2+x(aw+cx)+(bw+dx)z=0
\end{equation}    
\begin{equation}\label{eq10}
-cp^2+w(ay+cz)+y(by+dz)=0
\end{equation}
\begin{equation}\label{eq11}
-dp^2+x(ay+cz)+z(by+dz)=0
\end{equation}
From Equation (\ref{eq5}), we get that $p=\pm 1$.

Consider first the case $p=-1$. We get from Equations (\ref{eq6}) and (\ref{eq7}) that $w=z=0$, and so we get from Equation (\ref{eq1}) that $y=\frac{1}{x}$. Now, from Equation (\ref{eq8}), we get that $a=d$ and, from Equation (\ref{eq9}), we get that $c=\frac{b}{x^2}$. In this case, we obtain that $\zeta$ is equivalent to $\zeta_1$.\vspace{0.1cm}

Now consider the case when $p=1$. This means that the images of $\gamma_1$ and $\gamma_2$ under $\zeta$ are the identity matrix. So, in the remaining cases below, we have $\zeta(\gamma_1)=\zeta(\gamma_2)=I_3$. Now, we subtract Equation (\ref{eq1}) from Equation (\ref{eq4}) to get that $w^2=z^2$, which implies that $w=\pm z$. We consider  each case separately.\vspace{0.1cm}

When $w=z$: From Equations (\ref{eq2}) and (\ref{eq3}), we get that $x=y=0$, and so, from Equations (\ref{eq1}) and (\ref{eq4}), we get that $w=z=\pm 1$. For $w=z=1$, we get that $\zeta$ is equivalent to $\zeta_4$ and for $w=z=-1$, we get that $\zeta$ is equivalent to $\zeta_3$.\vspace{0.1cm}

When $w=-z$: In this case, we consider two sub cases in the following.\vspace{0.1cm}

Case 1: When $x\neq 0$: From Equation (\ref{eq1}), we get that $y=\frac{1-w^2}{x}$. Then, direct computations in solving the system of equations (\ref{eq8}), \ldots, (\ref{eq11}) obtain that $a=\frac{2bw+dx}{x}$ and $c=\frac{b-bw^2}{x^2}$. So, in this case we obtain that $\zeta$ is equivalent to $\zeta_2$.\vspace{0.1cm}

Case 2: When $x=0$: From Equations (\ref{eq1}) and (\ref{eq4}), we get that $w^2=z^2=1$. So, we have $w=\pm1$ and $z=\mp 1$, and so, from Equation (\ref{eq9}), we get that $b=0$. Now, we have two subcases also.\vspace{0.1cm}

Case 2 (a): When $a\neq d$: From Equation (\ref{eq10}), we get that $y=\frac{2c}{wa+dz}=\pm \frac{2c}{a-d}$. If $y=\frac{2c}{a-d}$ then $\zeta$ is equivalent to $\zeta_6$ and if $y=-\frac{2c}{a-d}$ then $\zeta$ is equivalent to $\zeta_5$.\vspace{0.1cm}

Case 2 (b): When $a=d$: From Equation (\ref{eq10}), we get that $c=0$. So, if $w=1$ and $z=-1$ then $\zeta$ is equivalent to $\zeta_7$ and if $w=-1$ and $z=1$ then $\zeta$ is equivalent to $\zeta_8$.\vspace{0.1cm}

Therefore, $\zeta$ is equivalent to one of the representations $\zeta_i, 1\leq i \leq 8,$ as required.\\
\end{proof}

Now, we study the faithfulness of the representations $\zeta_i, 1\leq i \leq 8,$ given in Theorem \ref{localTVB2}.

\begin{theorem}
Every representation of type $\zeta_i, 1\leq i \leq 8,$ is unfaithful.
\end{theorem}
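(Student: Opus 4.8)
The plan is to show that each representation $\zeta_i$ fails to be injective by exhibiting, for each type, a nontrivial element of $TVB_2$ that lies in the kernel. Since the target is $GL_3(\mathbb{C})$ and $TVB_2$ is an infinite group (it contains, for instance, the infinite cyclic subgroup generated by $\sigma_1$), a clean strategy is to locate specific words in the generators that map to the identity matrix while being nontrivial in $TVB_2$ itself. The key preliminary observation is that in every type the third basis vector is fixed by all generators (the matrices have the block form with a trailing $1$ or a $2\times 2$ twist only in the $\gamma_2$ slot), so the representation really only sees a $2\times 2$ piece; this immediately suggests the image is small and kernel is large.

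The most efficient uniform approach would be to exploit the orders of the generator images. First I would note that for types $\zeta_3$ through $\zeta_8$ we have $\zeta_i(\gamma_1)=\zeta_i(\gamma_2)=I_3$, so every element of the form $\gamma_1 w \gamma_1 w^{-1}$ or more simply $\gamma_1\gamma_2$ times anything collapses; concretely, since $\gamma_1$ and $\gamma_2$ are nontrivial order-two elements of $TVB_2$ but map to the identity, the elements $\gamma_1$ and $\gamma_2$ themselves (or $\gamma_1\gamma_2$) already sit in the kernel, provided they are nontrivial in the group. I would verify nontriviality using the known structure of $TVB_n$ from~\cite{NegiPrabhakarKamada2023}: the generators $\gamma_1,\gamma_2$ are genuine order-two elements that are not identified with $e$, so $\zeta_i(\gamma_1)=I_3$ with $\gamma_1\neq e$ settles unfaithfulness for types $3$ through $8$ at once. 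For types $\zeta_1$ and $\zeta_2$ the twist generators are not sent to the identity, so here I would instead find a relation-free word in $\sigma_1$ and $\rho_1$ whose image is trivial: because $\zeta_i(\rho_1)$ has order two (one checks $\zeta_1(\rho_1)^2 = I_3$ directly from the matrix) and $\zeta_i(\sigma_1)$ has some finite-order or constrained image on the $2\times 2$ block, one can build commutator-type or power words that vanish.

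The cleanest argument across all eight cases, however, is to observe that each $\zeta_i$ factors through a quotient that is manifestly not isomorphic to $TVB_2$. Since every generator acts as the identity on the third coordinate and the images lie in a copy of $GL_2(\mathbb{C})$ embedded block-diagonally, the image group is a subgroup of $GL_2(\mathbb{C})$, and in particular it satisfies group laws (for instance, the Tits alternative forces any non-virtually-solvable image to contain a free group, but the constrained matrix entries here — determinants and traces pinned down by the defining relations — keep the image solvable or finite in each type). I would therefore argue that the image is virtually abelian while $TVB_2$ is not, or more elementarily that a specific short word equals the identity. The main obstacle will be types $\zeta_1$ and $\zeta_2$, where the generators have genuinely nontrivial images and no generator is sent to $I_3$; here I expect to need an explicit computation producing a trivial-image word, most likely by showing that $\zeta_i(\sigma_1)$ and $\zeta_i(\rho_1)$ generate a metabelian or reducible-to-diagonal group so that their commutator subgroup is too small, yielding a nontrivial kernel element such as a long commutator or a balanced word in $\sigma_1,\rho_1,\gamma_1,\gamma_2$ that the ambient group does not trivialize. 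Establishing that such a word is nontrivial in $TVB_2$ — rather than merely in its image — is the delicate point, and I would resolve it by appealing to the faithful (or at least known-structure) descriptions of $TVB_2$ available in the literature.
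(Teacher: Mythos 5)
There is a genuine gap in your treatment of $\zeta_1$, plus a factual slip about $\zeta_2$. First the slip: you claim that for $\zeta_1$ \emph{and} $\zeta_2$ the twist generators are not sent to the identity, but the statement of Theorem \ref{localTVB2} gives $\zeta_2(\gamma_1)=\zeta_2(\gamma_2)=I_3$, so $\zeta_2$ falls under your easy case and needs no separate argument. Only $\zeta_1$ has nontrivial $\gamma$-images. For types $2$ through $8$ your argument — $\gamma_1\neq e$ in $TVB_2$ yet $\zeta_i(\gamma_1)=I_3$ — is exactly the paper's argument and is fine.

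For $\zeta_1$, however, you never actually produce a kernel element: you list candidate strategies (finite order of $\rho_1$'s image, metabelian image, the Tits alternative) and acknowledge that "an explicit computation producing a trivial-image word" is still needed, together with a verification that the word is nontrivial in $TVB_2$. That is precisely the content of the claim, so the proof is incomplete as written. The paper closes this gap with a one-line computation: the matrices $\zeta_1(\sigma_1)$ and $\zeta_1(\rho_1)$ commute (both products equal
$\left(\begin{smallmatrix} b/x & dx & 0\\ d/x & b/x & 0\\ 0 & 0 & 1\end{smallmatrix}\right)$),
whereas $\sigma_1\rho_1\neq\rho_1\sigma_1$ in $TVB_2$ since no defining relation forces $\sigma_1$ and $\rho_1$ to commute; hence the commutator $\sigma_1\rho_1\sigma_1^{-1}\rho_1^{-1}$ is a nontrivial element of the kernel. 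You should replace your speculative paragraph with this concrete check (or an equivalent one) to complete the argument.
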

\begin{proof}
We consider the following two cases:

Case 1: The representations of type $\zeta_1$: In this case we have two unequal elements  $\sigma_1\rho_1$ and $ \rho_1\sigma_1$ of $ TVB_2$ but their images under $\zeta_1$ map to the same matrix. So $\zeta_1$ is unfaithful. 

Case 2: The representations of type $\zeta_i, 2\leq i \leq 8$. In this case $\gamma_1$ and $\gamma_2$ map to the identity matrix. So $\zeta_i, 2\leq i \leq 8,$ is unfaithful.
\end{proof}

We now study the irreducibility of the representations $\zeta_i, 1\leq i \leq 8,$ given in Theorem \ref{localTVB2}.

\begin{theorem}
All the representations in Theorem 3.1 are reducible to a two-dimensional representation and the following statements hold true.
\begin{enumerate}
    \item[(i)] The representations $\zeta_i$, $1\leq i \leq 2,$ are not further reducible to a one-dimensional representation.
    \item[(ii)] The representations $\zeta_i$, $3\leq i \leq 4,$ are further reducible to a one-dimensional representation if and only if $b=0$ and $a\neq d$.
    \item[(iii)] The representations $\zeta_i$, $5\leq i \leq 8,$ are further reducible to a one-dimensional representation.
\end{enumerate}
\end{theorem}

\begin{proof}
We first observe that all the given representations are reducible to a two-dimensional representation. This is because, in each representation matrix, the third standard basis vector \( e_3 \) is preserved—meaning that it is a common eigenvector for all the matrices. As a result, the three-dimensional representation space admits a one-dimensional invariant subspace spanned by \( e_3 \), and the representation decomposes into a direct sum of this one-dimensional subrepresentation and a complementary two-dimensional subspace. Therefore, each representation is reducible and can be expressed in block form with a \(2 \times 2\) subrepresentation.\vspace{0.1cm}

To determine whether this \(2 \times 2\) subrepresentation is further reducible, we examine whether all the corresponding \(2 \times 2\) matrices share a common eigenvector.\vspace{0.1cm}

\text{Case 1:} The representations of type $\zeta_1$. In this case, the $2 \times 2$ composition factor of $\zeta_1(\sigma_1)$ is
$\begin{pmatrix}
d & b \\
x^2 & d
\end{pmatrix}$
with eigenvectors
$
\begin{pmatrix}
-x \\
1
\end{pmatrix}
$ and
$
\begin{pmatrix}
x \\
1
\end{pmatrix}.
$
The eigenvectors of the matrices $\zeta_1(\gamma_1)$ and $\zeta_1(\gamma_2)$ are the same which are
$\begin{pmatrix}
0 \\
1
\end{pmatrix}$
and 
$\begin{pmatrix}
1 \\
0
\end{pmatrix}$.
So, $\begin{pmatrix}-x \\ 1\end{pmatrix}$ or $\begin{pmatrix}x \\ 1\end{pmatrix}$ would be equal to $\begin{pmatrix}0 \\ 1\end{pmatrix}$ only if $x = 0$, but in this case $x \neq 0$. Hence, $\zeta_1$ is not further reducible to a one-dimensional representation.\vspace{0.1cm}

Case 2: The representations of type $\zeta_2$. The eigenvectors of the $2 \times 2$ composition factor of $\zeta_2(\sigma_1)$ are
$\begin{pmatrix}
 \frac{-x}{w + 1} \\
1
\end{pmatrix}$
and $
\begin{pmatrix}
 \frac{-x}{w - 1} \\
1
\end{pmatrix}.
$
The eigenvectors of the $2 \times 2$ composition factor of $\zeta_2(\gamma_1)$ are
$
\begin{pmatrix}
0 \\
1
\end{pmatrix}$
and
$\begin{pmatrix}
1 \\
0
\end{pmatrix}.
$
These matrices share a common eigenvector only if $x = 0$ and $w + 1 \neq 0$ or $x = 0$ and $w - 1 \neq 0$ , but here $x \neq 0$ in this case. Thus, $\zeta_2$ is not reducible to a one-dimensional representation.\vspace{0.1cm}

\text{Case 3:} The representations of type $\zeta_3$. The eigenvectors of the $2 \times 2$ composition factor of $\zeta_3(\sigma_1)$ are
\[
\begin{pmatrix}
\frac{-2b}{a - d + \sqrt{a^2 + 4bc + d^2 - 2ad}} \\
1
\end{pmatrix}
\quad \text{and} \quad
\begin{pmatrix}
\frac{2b}{-a + d + \sqrt{a^2 + 4bc + d^2 - 2ad}} \\
1 
\end{pmatrix}.
\]
The eigenvectors of the $2 \times 2$ composition factor of $\zeta_3(\rho_1)$ (or $\zeta_3(\gamma_i)$ for $i = 1, 2$) are
$
\begin{pmatrix}
0 \\
1
\end{pmatrix}
$ and $
\begin{pmatrix}
1 \\
0
\end{pmatrix}.
$
We see that the first eigenvector becomes $\begin{pmatrix}
0 \\
1
\end{pmatrix}$ when $b = 0$ and $a\ne d$. The second becomes $\begin{pmatrix}
0 \\
1
\end{pmatrix}$ when $b = 0$ and $a\ne d$ also. Thus, all the $2 \times 2$ composition factors have a common eigenvector if and only if $b = 0$ and $a \neq d$.\vspace{0.1cm}

\text{Case 4:} The representations of type $\zeta_4$. This case is \text{similar to Case 3} and follows the same reasoning.\vspace{0.1cm}

\text{Case 5:} The representations of type $\zeta_5$. All $2 \times 2$ composition factors of the matrices $\zeta_5(\sigma_1)$, $\zeta_5(\rho_1)$, $\zeta_5(\gamma_1)$, and $\zeta_5(\gamma_2)$ share a common eigenvector
$
\begin{pmatrix}
0 \\
1
\end{pmatrix}.
$
Thus, $\zeta_5$ is \text{reducible to a one-dimensional representation}.\vspace{0.1cm}

Case 6: The representations of type $\zeta_i, i=6,7,8$. This is the same as Case 5; $\zeta$ is reducible to a one-dimensional representation.

\end{proof}

In the following, we determine the forms of all complex homogeneous local representations of $TVB_n$ for all $n\geq 3$.

\begin{theorem}\label{localTVBn}
Consider $n\geq 3$ and let $\zeta': TVB_n \longrightarrow \mathrm{GL}_{n+1}(\mathbb{C})$ be a complex homogeneous local representation of $TVB_n$. Then, $\zeta'$ is equivalent to one of the following seven representations.
\begin{itemize}
\item[(1)] $\zeta'_1: TVB_n \longrightarrow \mathrm{GL}_{n+1}(\mathbb{C})$ such that
$$\zeta'_1(\sigma_i)=\left( \begin{array}{c|@{}c|c@{}}
   \begin{matrix}
     I_{i-1} 
   \end{matrix} 
      & \textbf{0} & \textbf{0} \\
      \hline
    \textbf{0} &\hspace{0.2cm} \begin{matrix}
   		0\ & b\ \\
   		c\ & 0\ \\
   		\end{matrix}  & \textbf{0}  \\
\hline
\textbf{0} & \textbf{0} & I_{n-i}
\end{array} \right), \zeta'_1(\rho_i)=\left( \begin{array}{c|@{}c|c@{}}
   \begin{matrix}
     I_{i-1} 
   \end{matrix} 
      & \textbf{0} & \textbf{0} \\
      \hline
    \textbf{0} &\hspace{0.2cm} \begin{matrix}
   		0\ & -\frac{\sqrt{b}}{\sqrt{c}}\ \\
   		-\frac{\sqrt{c}}{\sqrt{b}}\ & 0\ \\
   		\end{matrix}  & \textbf{0}  \\
\hline
\textbf{0} & \textbf{0} & I_{n-i}
\end{array} \right)$$
$$\text{ and \ } \zeta'_1(\gamma_j)=\left( \begin{array}{c|@{}c|c@{}}
   \begin{matrix}
     I_{j-1} 
   \end{matrix} 
      & \textbf{0} & \textbf{0} \\
      \hline
    \textbf{0} &\hspace{0.2cm} \begin{matrix}
   		-1\ & 0\ \\
   		0\ & 1\ \\
   		\end{matrix}  & \textbf{0}  \\
\hline
\textbf{0} & \textbf{0} & I_{n-j}
\end{array} \right) \text{ for } 1\leq i \leq n-1,\ 1 \leq j \leq n, \text{ where } b,c\in \mathbb{C}^*.$$
\vspace*{0.1cm}
\item[(2)] $\zeta'_2: TVB_n \longrightarrow \mathrm{GL}_{n+1}(\mathbb{C})$ such that
$$\zeta'_2(\sigma_i)=\left( \begin{array}{c|@{}c|c@{}}
   \begin{matrix}
     I_{i-1} 
   \end{matrix} 
      & \textbf{0} & \textbf{0} \\
      \hline
    \textbf{0} &\hspace{0.2cm} \begin{matrix}
   		0\  & b\ \\
   		c\  & 0\ \\
   		\end{matrix}  & \textbf{0}  \\
\hline
\textbf{0} & \textbf{0} & I_{n-i}
\end{array} \right), \zeta'_2(\rho_i)=\left( \begin{array}{c|@{}c|c@{}}
   \begin{matrix}
     I_{i-1} 
   \end{matrix} 
      & \textbf{0} & \textbf{0} \\
      \hline
    \textbf{0} &\hspace{0.2cm} \begin{matrix}
   		0\ & \frac{\sqrt{b}}{\sqrt{c}}\ \\
   		\frac{\sqrt{c}}{\sqrt{b}}\ & 0\ \\
   		\end{matrix}  & \textbf{0}  \\
\hline
\textbf{0} & \textbf{0} & I_{n-i}
\end{array} \right)$$
$$
\text{ and \ } \zeta'_2(\gamma_j)=\left( \begin{array}{c|@{}c|c@{}}
   \begin{matrix}
     I_{j-1} 
   \end{matrix} 
      & \textbf{0} & \textbf{0} \\
      \hline
    \textbf{0} &\hspace{0.2cm} \begin{matrix}
   		-1\  & 0\ \\
   		0\  & 1\ \\
   		\end{matrix}  & \textbf{0}  \\
\hline
\textbf{0} & \textbf{0} & I_{n-j}
\end{array} \right) \text{ for } 1\leq i \leq n-1, \ 1 \leq j \leq n, \text{ where } b,c\in \mathbb{C}^*.$$
\vspace*{0.1cm}
\item[(3)] $\zeta'_3: TVB_n \longrightarrow \mathrm{GL}_{n+1}(\mathbb{C})$ such that
$$\zeta'_3(\sigma_i)=\left( \begin{array}{c|@{}c|c@{}}
   \begin{matrix}
     I_{i-1} 
   \end{matrix} 
      & \textbf{0} & \textbf{0} \\
      \hline
    \textbf{0} &\hspace{0.2cm} \begin{matrix}
   		0\ & b\ \\
   		c\  & 0\ \\
   		\end{matrix}  & \textbf{0}  \\
\hline
\textbf{0} & \textbf{0} & I_{n-i}
\end{array} \right), \zeta'_3(\rho_i)=\left( \begin{array}{c|@{}c|c@{}}
   \begin{matrix}
     I_{i-1} 
   \end{matrix} 
      & \textbf{0} & \textbf{0} \\
      \hline
    \textbf{0} &\hspace{0.2cm} \begin{matrix}
   		0\ & -\frac{\sqrt{b}}{\sqrt{c}}\ \\
   		-\frac{\sqrt{c}}{\sqrt{b}}\  & 0\ \\
   		\end{matrix}  & \textbf{0}  \\
\hline
\textbf{0} & \textbf{0} & I_{n-i}
\end{array} \right) \text{ and \ } \zeta'_3(\gamma_j)=I_{n+1}$$
for $1\leq i \leq n-1,\ 1 \leq j \leq n,$ where $b,c\in \mathbb{C}^*.$
\vspace*{0.1cm}
\item[(4)] $\zeta'_4: TVB_n \longrightarrow \mathrm{GL}_{n+1}(\mathbb{C})$ such that
$$\zeta'_4(\sigma_i)=\left( \begin{array}{c|@{}c|c@{}}
   \begin{matrix}
     I_{i-1} 
   \end{matrix} 
      & \textbf{0} & \textbf{0} \\
      \hline
    \textbf{0} &\hspace{0.2cm} \begin{matrix}
   		0\ & b\ \\
   		c\  & 0\ \\
   		\end{matrix}  & \textbf{0}  \\
\hline
\textbf{0} & \textbf{0} & I_{n-i}
\end{array} \right), \zeta'_4(\rho_i)=\left( \begin{array}{c|@{}c|c@{}}
   \begin{matrix}
     I_{i-1} 
   \end{matrix} 
      & \textbf{0} & \textbf{0} \\
      \hline
    \textbf{0} &\hspace{0.2cm} \begin{matrix}
   		0\ & \frac{\sqrt{b}}{\sqrt{c}}\ \\
   		\frac{\sqrt{c}}{\sqrt{b}}\ & 0\ \\
   		\end{matrix}  & \textbf{0}  \\
\hline
\textbf{0} & \textbf{0} & I_{n-i}
\end{array} \right) \text{ and \ } \zeta'_4(\gamma_j)=I_{n+1}$$ 
for $1\leq i \leq n-1, \ 1 \leq j \leq n,$ where $b,c\in \mathbb{C}^*.$
\vspace*{0.1cm}
\item[(5)] $\zeta'_5: TVB_n \longrightarrow \mathrm{GL}_{n+1}(\mathbb{C})$ such that
$$\zeta'_5(\sigma_i)=I_{n+1},\ \zeta'_5(\rho_i)=\left( \begin{array}{c|@{}c|c@{}}
   \begin{matrix}
     I_{i-1} 
   \end{matrix} 
      & \textbf{0} & \textbf{0} \\
      \hline
    \textbf{0} &\hspace{0.2cm} \begin{matrix}
   		0\ & x\ \\
   		\frac{1}{x}\  & 0\ \\
   		\end{matrix}  & \textbf{0}  \\
\hline
\textbf{0} & \textbf{0} & I_{n-i}
\end{array} \right) \text{ and \ } \zeta'_5(\gamma_j)=\left( \begin{array}{c|@{}c|c@{}}
   \begin{matrix}
     I_{j-1} 
   \end{matrix} 
      & \textbf{0} & \textbf{0} \\
      \hline
    \textbf{0} &\hspace{0.2cm} \begin{matrix}
   		-1\ & 0\ \\
   		0 \ & 1\ \\
   		\end{matrix}  & \textbf{0}  \\
\hline
\textbf{0} & \textbf{0} & I_{n-j}
\end{array} \right)$$
$\text{ for } 1\leq i \leq n-1, \ 1 \leq j \leq n, \text{ where } x\in \mathbb{C}^*.$
\vspace*{0.1cm}
\item[(6)] $\zeta'_6: TVB_n \longrightarrow \mathrm{GL}_{n+1}(\mathbb{C})$ such that
$$\zeta'_6(\sigma_i)=I_{n+1},\  \zeta'_6(\rho_i)=\left( \begin{array}{c|@{}c|c@{}}
   \begin{matrix}
     I_{i-1} 
   \end{matrix} 
      & \textbf{0} & \textbf{0} \\
      \hline
    \textbf{0} &\hspace{0.2cm} \begin{matrix}
   		0\ & x\ \\
   		\frac{1}{x}\  & 0\ \\
   		\end{matrix}  & \textbf{0}  \\
\hline
\textbf{0} & \textbf{0} & I_{n-i}
\end{array} \right)
\text{ and \ } \zeta'_6(\gamma_j)=I_{n+1},$$
$\text{ for } 1\leq i \leq n-1, 1 \leq j \leq n, \text{ where } x \in \mathbb{C}^*.$
\vspace*{0.1cm}
\item[(7)] $\zeta'_7: TVB_n \longrightarrow \mathrm{GL}_{n+1}(\mathbb{C})$ such that
$$\zeta'_7(\sigma_i)=\zeta'_7(\rho_i)=\zeta'_7(\gamma_j)=I_{n+1} \text{ for } 1\leq i \leq n-1, \ 1 \leq j \leq n.$$\\
\end{itemize}
Here $\mathbb{C}^*$ = $\mathbb{C} \setminus \{0\}$ and $\textbf{0}$ is a zero matrix in all the case. 
\end{theorem}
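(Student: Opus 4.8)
The plan is to exploit homogeneity and locality to collapse everything into a single $3\times 3$ computation. A homogeneous local representation is determined by three common invertible $2\times 2$ blocks $S=\begin{pmatrix} a & b \\ c & d\end{pmatrix}$, $R=\begin{pmatrix} w & x \\ y & z\end{pmatrix}$, $G=\begin{pmatrix} p & q \\ r & s\end{pmatrix}$ for $\sigma_i,\rho_i,\gamma_j$. I would first observe that every relation of $TVB_n$ indexed by non-adjacent generators, namely (2.2), (2.4), (2.6), (2.10), (2.11), and the distant case of (2.9), holds automatically because the blocks involved act on disjoint coordinates and hence commute. By homogeneity each remaining (local) relation need only be checked once, on the corner spanned by $e_1,e_2,e_3$. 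Thus the problem reduces to finding all triples $(S,R,G)$ compatible with (2.1), (2.3), (2.5), (2.7), (2.8), (2.12), (2.13) on this corner.

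First I would fix $G$ and $R$. Expanding (2.12), $\rho_1\gamma_1=\gamma_2\rho_1$, in the corner, invertibility of $R$ forces $q=r=0$ and $s=1$, so $G=\mathrm{diag}(p,1)$ (whence the adjacent case of (2.9) is automatic), together with $w(p-1)=z(p-1)=0$; then (2.8), $\gamma_1^2=e$, gives $p=\pm1$. If $p=-1$ we get $w=z=0$, so $R$ is anti-diagonal, and (2.3), $\rho_1^2=e$, gives $R=\begin{pmatrix} 0 & x \\ 1/x & 0\end{pmatrix}$. If $p=1$, i.e. $G=I$, I would impose (2.3) together with the braid relation (2.5), $\rho_1\rho_2\rho_1=\rho_2\rho_1\rho_2$: combining the five polynomial conditions from (2.5) with $R^2=I$ leaves exactly $R=I$ or $R$ anti-diagonal as above. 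The decisive point here, and the one separating $n\ge 3$ from the $n=2$ case of Theorem~\ref{localTVB2}, is that (2.5) (which needs three strands) excludes $R=-I$: the diagonal involutions $\mathrm{diag}(-1,-1,1)$ and $\mathrm{diag}(1,-1,-1)$ fail the braid relation, so the type $\zeta_3$ with $\rho_1\mapsto -I$ has no analogue for $n\ge 3$.

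Next I would determine $S$ and its interaction with $R$. The braid relation (2.1), $\sigma_1\sigma_2\sigma_1=\sigma_2\sigma_1\sigma_2$, on the corner yields the constraints $a(a+bc-1)=0$, $abd=acd=0$, $ad(d-a)=0$, $d(d+bc-1)=0$, whose solutions are $S=I$, or $a=0$, or $d=0$. For the cross-relation (2.13), $\rho_1\sigma_1\rho_1=\gamma_2\gamma_1\sigma_1\gamma_1\gamma_2$, I note that when $R$ is anti-diagonal and $G=\mathrm{diag}(p,1)$ the right-hand side equals $\sigma_1$, since conjugation of the $S$-block by $\mathrm{diag}(-1,-1)=-I_2$ is trivial; hence (2.13) collapses to $RSR=S$, that is $RS=SR$, forcing $a=d$ and $b=cx^2$. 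Together with the braid constraints this leaves only $S=I$ or $S=\begin{pmatrix} 0 & b \\ c & 0\end{pmatrix}$ with $x^2=b/c$; in particular the spurious branch $a=0,\ d\ne 0$ is killed by $a=d$. In the remaining branch $R=I$ (possible only when $G=I$), relation (2.7), $\rho_1\rho_2\sigma_1=\sigma_2\rho_1\rho_2$, degenerates to $\sigma_1=\sigma_2$, which forces $S=I$; one also checks that (2.7) holds automatically when $R$ is anti-diagonal, so it imposes nothing new there.

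Finally I would assemble the admissible triples: $G\in\{\mathrm{diag}(-1,1),\,I\}$, $R\in\{\text{anti-diagonal},\,I\}$ with $R=I$ forcing $G=I$ and $S=I$, and $S\in\{\text{anti-diagonal},\,I\}$ subject to $x^2=b/c$, whose two square roots split each anti-diagonal-$S$ case into a $\zeta'_1/\zeta'_2$ (resp. $\zeta'_3/\zeta'_4$) pair. This produces exactly the seven representations listed. I expect the main obstacle to be bookkeeping: organizing the several polynomial systems—especially keeping the braid constraints from (2.1) and (2.5) together with the commutation $RS=SR$ from (2.13)—into clean disjoint cases and verifying that no spurious solution survives. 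This is precisely where the interplay of (2.1), (2.13) and the exclusion of $R=-I$ by (2.5) must be handled with care.
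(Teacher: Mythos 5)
Your proposal is correct and follows essentially the same route as the paper: reduce, by locality and homogeneity, to a single $3\times 3$ corner computation over the listed defining relations, derive the polynomial system on the blocks $S,R,G$, and solve it by cases (the paper itself only sketches this, deferring to the computation in Theorem~\ref{localTVB2}). Your write-up in fact supplies the detail the paper omits, correctly isolating the braid relations $\sigma_1\sigma_2\sigma_1=\sigma_2\sigma_1\sigma_2$ and $\rho_1\rho_2\rho_1=\rho_2\rho_1\rho_2$ as the conditions that eliminate the extra $n=2$ families (e.g.\ $\rho_1\mapsto -I$) and leave exactly the seven listed types.
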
 

\begin{proof}
Since $\zeta'$ is a homogeneous local representation of $TVB_n$, we may set
$$\zeta'(\sigma_i)=\left( \begin{array}{c|@{}c|c@{}}
   \begin{matrix}
     I_{i-1} 
   \end{matrix} 
      & \textbf{0} & \textbf{0} \\
      \hline
    \textbf{0} &\hspace{0.2cm} \begin{matrix}
   		a\ & b\ \\
   		c \ & d\ \\
   		\end{matrix}  & \textbf{0}  \\
\hline
\textbf{0} & \textbf{0} & I_{n-i}
\end{array} \right),$$ 
$$\zeta'(\rho_i)=\left( \begin{array}{c|@{}c|c@{}}
   \begin{matrix}
     I_{i-1} 
   \end{matrix} 
      & \textbf{0} & \textbf{0} \\
      \hline
    \textbf{0} &\hspace{0.2cm} \begin{matrix}
   		w\ & x\ \\
   		y \ & z\ \\
   		\end{matrix}  & \textbf{0}  \\
\hline
\textbf{0} & \textbf{0} & I_{n-i}
\end{array} \right),$$
and
$$\zeta'(\gamma_j)=\left( \begin{array}{c|@{}c|c@{}}
   \begin{matrix}
     I_{j-1} 
   \end{matrix} 
      & \textbf{0} & \textbf{0} \\
      \hline
    \textbf{0} &\hspace{0.2cm} \begin{matrix}
   		p\  & q\ \\
   		r\  & s\ \\
   		\end{matrix}  & \textbf{0}  \\
\hline
\textbf{0} & \textbf{0} & I_{n-j}
\end{array} \right)$$
$\text{ for } 1\leq i \leq n-1,\ 1 \leq j \leq n, \text{ where } a,b,c,d,w,x,y,z,p,q,r,s\in \mathbb{C}$ and the matrices supposed to be invertible. Now, according to the locality and homogeneity of the representation, it suffices to consider the following relations of the generators of $TVB_n$, and all other relations imply similar equations.
\begin{align*}
\sigma_1\sigma_2\sigma_1 &= \sigma_2\sigma_1\sigma_2, \\
\rho_1^2 &=1, \\
\rho_1\rho_2\rho_1 &=\rho_2\rho_1\rho_2,\\
\rho_1\rho_2\sigma_1 &=\sigma_2\rho_1\rho_2,\\
\gamma_1^2 &=1,\\
\gamma_1\gamma_2 &=\gamma_2\gamma_1,\\
\gamma_3\sigma_1 &=\sigma_1\gamma_3,\\
\gamma_3\rho_1 &=\rho_1\gamma_3,\\
\rho_1\gamma_1 &=\gamma_2\rho_1,\\
\rho_1\sigma_1\rho_1 &=\gamma_2\gamma_1\sigma_1\gamma_1\gamma_2.
\end{align*}
Applying these relations to the images of the generators of $TVB_n$ under the representation $\zeta'$ leads to a system of equations with twelve unknowns. A similar proof as the proof of Theorem \ref{localTVB2} gives the required result.
\end{proof}

Now, we study the faithfulness of the representations $\zeta'_i, 1\leq i \leq 7,$ given in Theorem \ref{localTVBn}.

\begin{theorem}
Every representation of type $\zeta'_i, 1\leq i \leq 7,$ is unfaithful.
\end{theorem}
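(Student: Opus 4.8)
The plan is to mirror the strategy used for the $n=2$ case: for each type $\zeta'_i$ exhibit an element of $TVB_n$ that is nontrivial in the group but lies in $\ker(\zeta'_i)$. I would first sort the seven representations into two groups according to why they collapse. In the types $\zeta'_3$ and $\zeta'_4$ all the twist generators satisfy $\zeta'_i(\gamma_j)=I_{n+1}$, while in $\zeta'_5$, $\zeta'_6$, and $\zeta'_7$ all the braiding generators satisfy $\zeta'_i(\sigma_k)=I_{n+1}$. In each of these five cases a whole family of generators maps to the identity, so one just observes that $\gamma_1$ (respectively $\sigma_1$) is a nontrivial element of $TVB_n$ that is sent to $I_{n+1}$; hence the kernel is nontrivial and the representation is unfaithful. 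This handles five of the seven types immediately.

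The remaining and only substantive cases are $\zeta'_1$ and $\zeta'_2$, where none of the three families of generators is sent to the identity. Here I would show that, in the image, $\sigma_i$ and $\rho_i$ commute, even though they do not commute in $TVB_n$. The key is a $2\times 2$ block computation: writing $A=\begin{pmatrix}0&b\\ c&0\end{pmatrix}$ for the nontrivial block of $\zeta'_1(\sigma_i)$ and $B=\begin{pmatrix}0&-\tfrac{\sqrt b}{\sqrt c}\\ -\tfrac{\sqrt c}{\sqrt b}&0\end{pmatrix}$ for that of $\zeta'_1(\rho_i)$, one finds $AB=BA=-\sqrt{bc}\,I_2$, and the analogous identity $AB'=B'A=\sqrt{bc}\,I_2$ holds for the $+$-sign block $B'$ of $\zeta'_2(\rho_i)$. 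Since $\sigma_i$ and $\rho_i$ act nontrivially only on the $i,i+1$ coordinates and as the identity elsewhere, these block identities lift to $\zeta'_k(\sigma_i)\zeta'_k(\rho_i)=\zeta'_k(\rho_i)\zeta'_k(\sigma_i)$ for $k=1,2$. Consequently the two distinct group elements $\sigma_i\rho_i$ and $\rho_i\sigma_i$ have the same image, so $\zeta'_1$ and $\zeta'_2$ are unfaithful; equivalently, the commutator $\sigma_i\rho_i\sigma_i^{-1}\rho_i^{-1}$ lies in the kernel.

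The step I expect to require the most care is the claim that the exhibited elements are genuinely nontrivial in $TVB_n$, namely that $\gamma_1\neq e$, that $\sigma_1\neq e$, and that $\sigma_i\rho_i\neq\rho_i\sigma_i$. This is where one must invoke the group presentation rather than the representation: among the defining relations (2.1)--(2.13) there is no relation forcing any generator to be trivial, and no mixed $\sigma$--$\rho$ relation with equal indices (relations (2.6)--(2.7) involve only indices differing by at least two or the triple $\rho_i\rho_{i+1}\sigma_i=\sigma_{i+1}\rho_i\rho_{i+1}$), so $\sigma_i$ and $\rho_i$ are not identified with their reverse product. In the write-up I would appeal to the known group structure of $TVB_n$ (for instance its containing $B_n$ and $VB_n$ as subgroups, where these elements are nontrivial and $\sigma_i,\rho_i$ are non-commuting) to certify nontriviality, exactly as in the proof of the corresponding theorem for $TVB_2$.
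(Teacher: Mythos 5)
Your proposal is correct and follows essentially the same strategy as the paper: for types where a whole family of generators maps to the identity you note the kernel is nontrivial, and for $\zeta'_1,\zeta'_2$ you exhibit two distinct group elements with equal images. The only difference is the choice of witnesses in the latter case --- you use $\sigma_i\rho_i$ versus $\rho_i\sigma_i$ (verified by the block computation $AB=BA=\mp\sqrt{bc}\,I_2$), whereas the paper uses $\gamma_1\sigma_1\gamma_2$ versus $\gamma_2\sigma_1\gamma_1$; both rest on the same (asserted, not proved) nontriviality facts about $TVB_n$.
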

\begin{proof}
    The representations \( \zeta'_1 \) and \( \zeta'_2 \) are unfaithful, as the elements \( \gamma_1 \sigma_1 \gamma_2 \) and \( \gamma_2 \sigma_1 \gamma_1 \) are distinct in \( TVB_n \), yet their images under both \( \zeta'_1 \) and \( \zeta'_2 \) are the same matrix. Moreover, the representations \( \zeta'_i \), for \( 3 \leq i \leq 7 \), are also unfaithful. Specifically, when \( i = 3, 4, 6, 7 \), the generators \( \gamma_j \) are mapped to the identity matrix, and when \( i = 5 \), the generators \( \sigma_j \) are mapped to the identity matrix.
\end{proof}

We now study the irreducibility of the representations $\zeta'_i, 1\leq i \leq 7,$ given in Theorem \ref{localTVBn}. First, we need the following lemma introduced in \cite{NASSER2025}.

\begin{lemma}
Consider $n \geq 2$ and let \( \rho : G \longrightarrow \mathrm{GL}_n(V) \) be a representation of a group G on a vector space V. Let H be any subgroup of G . If \(\rho : H \longrightarrow \mathrm{GL}_n(V) \) is irreducible, then \( \rho : G \longrightarrow \mathrm{GL}_n(V) \) is also irreducible.
\end{lemma}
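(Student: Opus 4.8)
The plan is to argue directly from the definition of irreducibility by comparing the families of invariant subspaces attached to the $H$-action and to the $G$-action. The guiding observation is that, because $H$ is a subgroup of $G$, every subspace of $V$ that is invariant under $\rho(g)$ for all $g\in G$ is \emph{a fortiori} invariant under $\rho(h)$ for all $h\in H$. In other words, passing from the whole group to a subgroup can only enlarge (never shrink) the collection of invariant subspaces. Consequently, if the restricted representation $\rho|_H$ has no nontrivial proper invariant subspace, then the smaller collection of $G$-invariant subspaces cannot contain one either.

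First I would fix an arbitrary subspace $W\subseteq V$ that is $G$-invariant, meaning $\rho(g)(W)\subseteq W$ for every $g\in G$. Next, since $H\subseteq G$, this inclusion holds in particular for each $h\in H$, so $W$ is an $H$-invariant subspace of $V$. Then, invoking the hypothesis that $\rho|_H\colon H\to\mathrm{GL}_n(V)$ is irreducible, I conclude that $W=\{0\}$ or $W=V$. As $W$ was an arbitrary $G$-invariant subspace, this shows that $\rho\colon G\to\mathrm{GL}_n(V)$ admits no nontrivial proper invariant subspace, which is exactly the assertion that $\rho$ is irreducible. Equivalently, the argument can be packaged as a contrapositive: were $\rho\colon G\to\mathrm{GL}_n(V)$ reducible, there would exist a nontrivial proper $G$-invariant subspace $W$, and this same $W$ would be $H$-invariant, contradicting the irreducibility of $\rho|_H$.

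The only point that genuinely requires care is the direction of the containment of invariant-subspace families, namely that $G$-invariance implies $H$-invariance and not the converse; this is immediate from $H\subseteq G$ and is where the hypothesis is actually used. Beyond this, the proof is a one-line application of the definition of irreducibility, so I anticipate no substantive obstacle. This lemma will then serve as a convenient tool for the subsequent irreducibility analysis of the representations $\zeta'_i$: it suffices to exhibit a single subgroup $H$ on which the restriction is already irreducible.
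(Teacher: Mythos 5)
Your argument is correct: every $G$-invariant subspace is automatically $H$-invariant since $H\subseteq G$, so irreducibility of the restriction forces irreducibility of the full representation. The paper itself states this lemma without proof (citing an external reference), and your one-line definitional argument is exactly the standard proof one would supply.
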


\begin{theorem}
Every representation of type $\zeta'_i, 1\leq i \leq 7,$ is reducible to an $n$-dimensional representation. Furthermore, the representations \( \zeta_i' \), for \( 1 \leq i \leq 4 \), are not further reducible to degree an $(n-1)$-dimensional representation if $bc \neq 1$.
\end{theorem}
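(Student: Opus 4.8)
The plan is to treat the two assertions separately, using throughout the block structure of the matrices $\zeta'(\sigma_i),\zeta'(\rho_i),\zeta'(\gamma_j)\in GL_{n+1}(\mathbb{C})$ acting on $\mathbb{C}^{n+1}=\langle e_1,\dots,e_{n+1}\rangle$. For the first assertion I would first observe that the last basis vector $e_{n+1}$ is a common fixed vector for every generator in each of the seven families: each $\zeta'(\sigma_i)$ and $\zeta'(\rho_i)$ has its nontrivial $2\times 2$ block in positions $i,i+1$ with $i+1\le n$, so it fixes $e_{n+1}$, and although $\zeta'(\gamma_n)$ acts nontrivially in positions $n,n+1$ its block $\mathrm{diag}(-1,1)$ still sends $e_{n+1}\mapsto e_{n+1}$. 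The same inspection shows that $W:=\langle e_1,\dots,e_n\rangle$ is invariant under all generators, so $\mathbb{C}^{n+1}=W\oplus\mathbb{C}e_{n+1}$ is a decomposition into subrepresentations; this exhibits every $\zeta'_i$ as reducible with an $n$-dimensional constituent $\zeta'_i|_W$, which is exactly reducibility to degree $n$.

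For the second assertion it remains to prove, for $1\le i\le 4$ and $bc\neq 1$, that the $n$-dimensional subrepresentation $\zeta'_i|_W$ is irreducible. By the preceding lemma it suffices to exhibit inside the image a collection of operators with no common proper invariant subspace, and my strategy is to produce a family of \emph{diagonal} operators whose joint eigenspaces are precisely the coordinate lines $\mathbb{C}e_k$, and then to use the $\zeta'(\sigma_i)$ to glue these lines together. For $\zeta'_1$ and $\zeta'_2$ the diagonal operators are the restrictions $\zeta'(\gamma_j)|_W=\mathrm{diag}(1,\dots,-1,\dots,1)$ with the $-1$ in position $j$, for $1\le j\le n$: the vector $e_k$ is the unique common eigenvector on which $\gamma_k$ acts by $-1$ while every other $\gamma_j$ acts by $+1$, so the associated weights are pairwise distinct and every invariant subspace must be a coordinate subspace $\langle e_k:k\in S\rangle$.

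The nontrivial case is $\zeta'_3$ and $\zeta'_4$, where the $\gamma_j$ are trivial on $W$ and the diagonalizing family must instead be manufactured from products. I would take $D_i:=\zeta'(\sigma_i)\zeta'(\rho_i)|_W$; a short computation of the $2\times 2$ block gives $D_i=\mathrm{diag}(1,\dots,\nu,\nu,\dots,1)$ with $\nu=-\sqrt{bc}$ in positions $i,i+1$ for $\zeta'_3$ and $\nu=\sqrt{bc}$ for $\zeta'_4$. The weight of $e_k$ under $(D_1,\dots,D_{n-1})$ records exactly which of $D_{k-1},D_k$ act by $\nu$; for $n\ge 3$ these subsets $T_k=\{k-1,k\}\cap\{1,\dots,n-1\}$ are pairwise distinct, so the weights are distinct as soon as $\nu\neq 1$. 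Here is where the hypothesis enters: $bc\neq 1$ forces $\sqrt{bc}\neq\pm 1$, hence $\nu\neq 1$ in both types, and the joint eigenspaces of the $D_i$ are again the coordinate lines, forcing any invariant $U\subseteq W$ to be a coordinate subspace.

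In all four cases I would then invoke invariance under the $\zeta'(\sigma_i)$: since $\zeta'(\sigma_i)e_i=c\,e_{i+1}$ and $\zeta'(\sigma_i)e_{i+1}=b\,e_i$ with $b,c\neq 0$, membership of $i$ in $S$ is equivalent to membership of $i+1$, so $S$ is a union of connected components of the path $1-2-\cdots-n$ and therefore $S=\emptyset$ or $S=\{1,\dots,n\}$. Hence $U=0$ or $U=W$, so $\zeta'_i|_W$ is irreducible, i.e.\ not reducible to degree $n-1$. I expect the main obstacle to be precisely the case $i=3,4$: the conceptual step is recognizing that, once the $\gamma$'s degenerate to the identity, the coordinate-separating diagonal operators must be recovered from the products $\sigma_i\rho_i$, and verifying that the hypothesis $bc\neq 1$ is exactly what guarantees $\nu\neq 1$ (at $bc=1$ each $D_i$ can become scalar on its block, so the weights collapse and the argument for distinct eigenspaces fails).
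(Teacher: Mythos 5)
Your proposal is correct, and it proves the second assertion by a genuinely different route from the paper. For reducibility to degree $n$ you and the paper argue identically: $e_{n+1}$ is fixed by every generator, so $\mathbb{C}^{n+1}=W\oplus\mathbb{C}e_{n+1}$ with $W=\langle e_1,\dots,e_n\rangle$ splits off an $n$-dimensional constituent. For irreducibility of $\zeta'_i|_W$ when $bc\neq 1$, the paper restricts to the braid subgroup $B_n\leq TVB_n$, quotes Theorem 3.3 of \cite{chreif2024} (which gives irreducibility of that degree-$n$ local representation of $B_n$ if and only if $bc\neq 1$), and lifts the conclusion back via the subgroup lemma. You instead give a self-contained weight-space argument using the full group: for $\zeta'_1,\zeta'_2$ the diagonal operators $\zeta'(\gamma_j)|_W$ separate the coordinate lines, and for $\zeta'_3,\zeta'_4$ you recover a separating diagonal family from $D_i=\zeta'(\sigma_i)\zeta'(\rho_i)|_W$, whose block is $\mathrm{diag}(\nu,\nu)$ with $\nu=\mp\sqrt{bc}$; the hypothesis $bc\neq 1$ is exactly $\nu\neq 1$, the index sets $T_k=\{k-1,k\}\cap\{1,\dots,n-1\}$ are pairwise distinct precisely because $n\geq 3$, and the gluing step $\zeta'(\sigma_i)e_i=c\,e_{i+1}$, $\zeta'(\sigma_i)e_{i+1}=b\,e_i$ with $b,c\in\mathbb{C}^*$ forces any coordinate-invariant subspace to be $0$ or $W$. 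All of these computations check out. The paper's route is shorter and inherits the sharper ``if and only if'' statement (irreducibility already on the $B_n$ subgroup); your route avoids the external reference entirely and makes visible exactly where $bc\neq 1$ and $n\geq 3$ enter, at the cost of using generators outside $B_n$.
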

\begin{proof}
We first observe that all the given representations are reducible to an $n$-dimensional representation. This is due to the fact that, in each representation matrix, the \((n+1)^\text{th}\) standard basis vector \( e_{n+1} \) is preserved. In other words, \( e_{n+1} \) is a common eigenvector for all the matrices in the representation. Consequently, \( e_{n+1} \) spans a one-dimensional invariant subspace, which implies that the representation is reducible to an $n$-dimensional representation.

To analyze the further reducibility of the representations \( \zeta_i' \), for \( 1 \leq i \leq 4 \), we consider the \( n \times n \) composition factors of the matrices \( \zeta_i'(\sigma_j) \), \( \zeta_i'(\rho_j) \), and \( \zeta_i'(\gamma_j) \) for \( 1 \leq j \leq 4 \). Since \( B_n \) is a subgroup of \( TVB_n \), we can restrict each \( \zeta_i' \) to \( B_n \). By Theorem 3.3 in \cite{chreif2024}, and using the previous lemma, the resulting degree \( n \) representation is irreducible if and only if \( bc \neq 1 \). Therefore, the representations \( \zeta_i' \), $1\leq i \leq 4$, are not further reducible to an $n$-dimensional representation if \( bc \neq 1 \).
\end{proof}

\section{On Complex Local Representations of $STVB_2$ and $STVG_2$}

In this section, we determine the forms of all complex homogeneous local representations of $STVB_2$ and $STVG_2$. In addition, we study the irreducibility and the faithfulness for these representations.

\begin{theorem} \label{localSTVB2}
Let $\eta: STVB_2 \longrightarrow \mathrm{M}_3(\mathbb{C})$ be a complex local representation of $STVB_2$. Then, $\eta$ is equivalent to one of the following thirteen representations. \\
\begin{itemize}
\item[(1)] $\eta_1: STVB_2 \longrightarrow \mathrm{M}_3(\mathbb{C})$ such that
$$\eta_1(\sigma_1) =\left( \begin{array}{@{}c@{}}
  \begin{matrix}
   		a\ & b\  & 0\\
   		\frac{b}{x^2}\ & a\ & 0\\
   		0\ & 0\ & 1
   		\end{matrix}
\end{array} \right), \eta_1(\rho_1) =\left( \begin{array}{@{}c@{}}
 \begin{matrix}
   		0\ & x\ & 0\\
   		\frac{1}{x}\ & 0\ & 0\\
   		0\ & 0\ & 1
   		\end{matrix}
\end{array} \right), \eta_1(\tau_1) =\left( \begin{array}{@{}c@{}}
 \begin{matrix}
   		f\ & g\ & 0\\
   		\frac{g}{x^2}\ & f\ & 0\\
   		0\ & 0\ & 1
   		\end{matrix}
\end{array} \right),$$
$$\eta_1(\gamma_1) =\left( \begin{array}{@{}c@{}}
  \begin{matrix}
   		-1\ & 0\ & 0\\
   		0\ & 1\ & 0\\
   		0\ & 0\ & 1
   		\end{matrix}
\end{array} \right) \text{ and \ } \eta_1(\gamma_2) =\left( \begin{array}{@{}c@{}}
 \begin{matrix}
  		1\ & 0\ & 0\\
   		0\ & -1\ & 0\\
   		0\ & 0\ & 1
   		\end{matrix}
\end{array} \right),$$
where $a,b,x,f,g \in \mathbb{C}, a^2x^2-b^2\neq 0, x\neq 0$.
\vspace*{0.1cm}
\item[(2)] $\eta_2: STVB_2 \longrightarrow \mathrm{M}_3(\mathbb{C})$ such that
$$\eta_2(\sigma_1) =\left( \begin{array}{@{}c@{}}
  \begin{matrix}
   		a\ & b\ & 0\\
   		\frac{b-bz^2}{x^2}\ & \frac{ax+2bz}{x}\ & 0\\
   		0\ & 0\ & 1
   		\end{matrix}
\end{array} \right), \eta_2(\rho_1) =\left( \begin{array}{@{}c@{}}
 \begin{matrix}
   		-z\ & x\ & 0\\
   		\frac{1-z^2}{x}\ & z\ & 0\\
   		0\ & 0\ & 1
   		\end{matrix}
\end{array} \right),$$
$$\eta_2(\tau_1) =\left( \begin{array}{@{}c@{}}
 \begin{matrix}
   		f\ & g\ & 0\\
   		\frac{g-gz^2}{x^2}\ & \frac{fx+2gz}{x}\ & 0\\
   		0\ & 0\ & 1
   		\end{matrix}
\end{array} \right) \text{ and \ } \eta_2(\gamma_1)=\eta_2(\gamma_2)=I_3,$$
where $a,b,x,z,f,g \in \mathbb{C}, a^2x^2+2abxz-b^2+b^2z^2\neq 0, x\neq 0.$
\vspace*{0.1cm}
\item[(3)] $\eta_3: STVB_2 \longrightarrow \mathrm{M}_3(\mathbb{C})$ such that
$$\eta_3(\sigma_1) =\left( \begin{array}{@{}c@{}}
  \begin{matrix}
   		a\ & b\ & 0\\
   		c\ & d\ & 0\\
   		0\ & 0\ & 1
   		\end{matrix}
\end{array} \right), \eta_3(\rho_1) =\left( \begin{array}{@{}c@{}}
 \begin{matrix}
   		-1\ & 0\ & 0\\
   		0\ & -1\ & 0\\
   		0\ & 0\ & 1
   		\end{matrix}
\end{array} \right),$$ 
$$\eta_3(\tau_1) =\left( \begin{array}{@{}c@{}}
 \begin{matrix}
   		f\ & g\ & 0\\
   		\frac{cg}{b}\ & \frac{bf-ag+dg}{b}\ & 0\\
   		0\ & 0\ & 1
   		\end{matrix}
\end{array} \right) \text{ and \ } \eta_3(\gamma_1)=\eta_3(\gamma_2)=I_3,$$
where $a,b,c,d,f,g \in \mathbb{C}, ad-bc\neq 0, b\neq 0.$
\vspace*{0.1cm}
\item[(4)] $\eta_4: STVB_2 \longrightarrow \mathrm{M}_3(\mathbb{C})$ such that
$$\eta_4(\sigma_1) =\left( \begin{array}{@{}c@{}}
  \begin{matrix}
   		a\ & b\ & 0\\
   		c\ & d\ & 0\\
   		0\ & 0\ & 1
   		\end{matrix}
\end{array} \right), \eta_4(\rho_1) =\left( \begin{array}{@{}c@{}}
 \begin{matrix}
   		1\ & 0\ & 0\\
   		0\ & 1\ & 0\\
   		0\ & 0\ & 1
   		\end{matrix}
\end{array} \right),$$ 
$$\eta_4(\tau_1) =\left( \begin{array}{@{}c@{}}
 \begin{matrix}
   		f\ & g\ & 0\\
   		\frac{cg}{b}\ & \frac{bf-ag+dg}{b}\ & 0\\
   		0\ & 0\ & 1
   		\end{matrix}
\end{array} \right) \text{ and \ } \eta_4(\gamma_1)=\eta_4(\gamma_2)=I_3,$$
where $a,b,c,d,f,g \in \mathbb{C}, ad-bc\neq 0, b\neq 0$.
\vspace*{0.1cm}
\item[(5)] $\eta_5: STVB_2 \longrightarrow \mathrm{M}_3(\mathbb{C})$ such that
$$\eta_5(\sigma_1) =\left( \begin{array}{@{}c@{}}
  \begin{matrix}
   		a\ & 0\ & 0\\
   		0\ & a\ & 0\\
   		0\ & 0\ & 1
   		\end{matrix}
\end{array} \right), \eta_5(\rho_1) =\left( \begin{array}{@{}c@{}}
 \begin{matrix}
   		-1\ & 0\ & 0\\
   		0\ & -1\ & 0\\
   		0\ & 0\ & 1
   		\end{matrix}
\end{array} \right),$$ 
$$\eta_5(\tau_1) =\left( \begin{array}{@{}c@{}}
 \begin{matrix}
   		f\ & g\ & 0\\
   		h\ & k\ & 0\\
   		0\ & 0\ & 1
   		\end{matrix}
\end{array} \right) \text{ and \ } \eta_5(\gamma_1)=\eta_5(\gamma_2)=I_3,$$
where $a,f,g,h,k \in \mathbb{C}, a\neq 0$.
\vspace*{0.1cm}
\item[(6)] $\eta_6: STVB_2 \longrightarrow \mathrm{M}_3(\mathbb{C})$ such that
$$\eta_6(\sigma_1) =\left( \begin{array}{@{}c@{}}
  \begin{matrix}
   		a\ & 0\ & 0\\
   		c\ & d\ & 0\\
   		0\ & 0\ & 1
   		\end{matrix}
\end{array} \right), \eta_6(\rho_1) =\left( \begin{array}{@{}c@{}}
 \begin{matrix}
   		-1\ & 0\ & 0\\
   		0\ & -1\ & 0\\
   		0\ & 0\ & 1
   		\end{matrix}
\end{array} \right),$$ 
$$\eta_6(\tau_1) =\left( \begin{array}{@{}c@{}}
 \begin{matrix}
   		f\ & 0\ & 0\\
   		h\ & \frac{cf-ah+dh}{c}\ & 0\\
   		0\ & 0\ & 1
   		\end{matrix}
\end{array} \right) \text{ and \ } \eta_6(\gamma_1)=\eta_6(\gamma_2)=I_3,$$
where $a,c,d,f,h \in \mathbb{C}, ad\neq 0, c\neq 0$.
\vspace*{0.1cm}
\item[(7)] $\eta_7: STVB_2 \longrightarrow \mathrm{M}_3(\mathbb{C})$ such that
$$\eta_7(\sigma_1) =\left( \begin{array}{@{}c@{}}
  \begin{matrix}
   		a\ & 0\ & 0\\
   		c\ & \frac{-2c+ay}{y}\ & 0\\
   		0\ & 0\ & 1
   		\end{matrix}
\end{array} \right), \eta_7(\rho_1) =\left( \begin{array}{@{}c@{}}
 \begin{matrix}
   		1\ & 0\ & 0\\
   		y\ & -1\ & 0\\
   		0\ & 0\ & 1
   		\end{matrix}
\end{array} \right),$$ 
$$\eta_7(\tau_1) =\left( \begin{array}{@{}c@{}}
 \begin{matrix}
   		f\ & 0\ & 0\\
   		h\ & \frac{-2h+fy}{y}\ & 0\\
   		0\ & 0\ & 1
   		\end{matrix}
\end{array} \right) \text{ and \ } \eta_7(\gamma_1)=\eta_7(\gamma_2)=I_3,$$
where $a,c,y,f,h \in \mathbb{C},a(-2c+ay)\neq 0 ,y\neq 0$.
\vspace*{0.1cm}
\item[(8)] $\eta_8: STVB_2 \longrightarrow \mathrm{M}_3(\mathbb{C})$ such that
$$\eta_8(\sigma_1) =\left( \begin{array}{@{}c@{}}
  \begin{matrix}
   		a\ & 0\ & 0\\
   		c\ & \frac{2c+ay}{y}\ & 0\\
   		0\ & 0\ & 1
   		\end{matrix}
\end{array} \right), \eta_8(\rho_1) =\left( \begin{array}{@{}c@{}}
 \begin{matrix}
   		-1\ & 0\ & 0\\
   		y\ & 1\ & 0\\
   		0\ & 0\ & 1
   		\end{matrix}
\end{array} \right),$$ 
$$\eta_8(\tau_1) =\left( \begin{array}{@{}c@{}}
 \begin{matrix}
   		f\ & 0\ & 0\\
   		h\ & \frac{2h+fy}{y}\ & 0\\
   		0\ & 0\ & 1
   		\end{matrix}
\end{array} \right) \text{ and \ } \eta_8(\gamma_1)=\eta_8(\gamma_2)=I_3,$$
where $a,c,y,f,h \in \mathbb{C},a(2c+ay)\neq 0 ,y\neq 0$.
\vspace*{0.1cm}
\item[(9)] $\eta_9: STVB_2 \longrightarrow \mathrm{M}_3(\mathbb{C})$ such that
$$\eta_9(\sigma_1) =\left( \begin{array}{@{}c@{}}
  \begin{matrix}
   		a\ & 0\ & 0\\
   		0\ & a\ & 0\\
   		0\ & 0\ & 1
   		\end{matrix}
\end{array} \right), \eta_9(\rho_1) =\left( \begin{array}{@{}c@{}}
 \begin{matrix}
   		1\ & 0\ & 0\\
   		0\ & 1\ & 0\\
   		0\ & 0\ & 1
   		\end{matrix}
\end{array} \right),$$ 
$$\eta_9(\tau_1) =\left( \begin{array}{@{}c@{}}
 \begin{matrix}
   		f\ & g\ & 0\\
   		h\ & k\ & 0\\
   		0\ & 0\ & 1
   		\end{matrix}
\end{array} \right) \text{ and \ } \eta_9(\gamma_1)=\eta_9(\gamma_2)=I_3,$$
where $a,f,g,h,k \in \mathbb{C}, a\neq 0$.
\vspace*{0.1cm}
\item[(10)] $\eta_{10}: STVB_2 \longrightarrow \mathrm{M}_3(\mathbb{C})$ such that
$$\eta_{10}(\sigma_1) =\left( \begin{array}{@{}c@{}}
  \begin{matrix}
   		a\ & 0\ & 0\\
   		c\ & d\ & 0\\
   		0\ & 0\ & 1
   		\end{matrix}
\end{array} \right), \eta_{10}(\rho_1) =\left( \begin{array}{@{}c@{}}
 \begin{matrix}
   		1\ & 0\ & 0\\
   		0\ & 1\ & 0\\
   		0\ & 0\ & 1
   		\end{matrix}
\end{array} \right),$$ 
$$\eta_{10}(\tau_1) =\left( \begin{array}{@{}c@{}}
 \begin{matrix}
   		f\ & 0\ & 0\\
   		h\ & \frac{cf-ah+dh}{c}\ & 0\\
   		0\ & 0\ & 1
   		\end{matrix}
\end{array} \right) \text{ and \ } \eta_{10}(\gamma_1)=\eta_{10}(\gamma_2)=I_3,$$ 
where $a,c,d,f,h \in \mathbb{C}, ad\neq 0, c\neq 0$.
\vspace*{0.1cm}
\item[(11)] $\eta_{11}: STVB_2 \longrightarrow \mathrm{M}_3(\mathbb{C})$ such that
$$\eta_{11}(\sigma_1) =\left( \begin{array}{@{}c@{}}
  \begin{matrix}
   		a\ & 0\ & 0\\
   		0\ & d\ & 0\\
   		0\ & 0\ & 1
   		\end{matrix}
\end{array} \right), \eta_{11}(\rho_1) =\left( \begin{array}{@{}c@{}}
 \begin{matrix}
   		1\ & 0\ & 0\\
   		0\ & -1\ & 0\\
   		0\ & 0\ & 1
   		\end{matrix}
\end{array} \right),$$
$$\eta_{11}(\tau_1) =\left( \begin{array}{@{}c@{}}
 \begin{matrix}
   		f\ & 0\ & 0\\
   		0\ & k\ & 0\\
   		0\ & 0\ & 1
   		\end{matrix}
\end{array} \right) \text{ and \ } \eta_{11}(\gamma_1)=\eta_{11}(\gamma_2)=I_3,$$
where $a,d,f,k\in \mathbb{C}, ad\neq 0$.
\vspace*{0.1cm}
\item[(12)] $\eta_{12}: STVB_2 \longrightarrow \mathrm{M}_3(\mathbb{C})$ such that
$$\eta_{12}(\sigma_1) =\left( \begin{array}{@{}c@{}}
  \begin{matrix}
   		a\ & 0\ & 0\\
   		0\ & d\ & 0\\
   		0\ & 0\ & 1
   		\end{matrix}
\end{array} \right), \eta_{12}(\rho_1) =\left( \begin{array}{@{}c@{}}
 \begin{matrix}
   		-1\ & 0\ & 0\\
   		0\ & 1\ & 0\\
   		0\ & 0\ & 1
   		\end{matrix}
\end{array} \right),$$ 
$$\eta_{12}(\tau_1) =\left( \begin{array}{@{}c@{}}
 \begin{matrix}
   		f\ & 0\ & 0\\
   		0\ & k\ & 0\\
   		0\ & 0\ & 1
   		\end{matrix}
\end{array} \right) \text{ and \ } \eta_{12}(\gamma_1)=\eta_{12}(\gamma_2)=I_3,$$
where $a,d,f,k\in \mathbb{C}, ad\neq 0$.
\vspace*{0.1cm}
\item[(13)] $\eta_{13}: STVB_2 \longrightarrow \mathrm{M}_3(\mathbb{C})$ such that
$$\eta_{13}(\sigma_1) =\left( \begin{array}{@{}c@{}}
  \begin{matrix}
   		a\ & 0\ & 0\\
   		0\ & d\ & 0\\
   		0\ & 0\ & 1
   		\end{matrix}
\end{array} \right), \eta_{13}(\rho_1) =\left( \begin{array}{@{}c@{}}
 \begin{matrix}
   		1\ & 0\ & 0\\
   		0\ & 1\ & 0\\
   		0\ & 0\ & 1
   		\end{matrix}
\end{array} \right),$$
$$\eta_{13}(\tau_1) =\left( \begin{array}{@{}c@{}}
 \begin{matrix}
   		f\ & 0\ & 0\\
   		0\ & k\ & 0\\
   		0\ & 0\ & 1
   		\end{matrix}
\end{array} \right) \text{ and \ } \eta_{13}(\gamma_1)=\eta_{13}(\gamma_2)=I_3,$$
where $a,d,f,k\in \mathbb{C}, ad\neq 0$.
\end{itemize}
In addition, if $\eta(\tau_1)$ is invertible, then $\eta$ becomes a representation of $STVG_n$.
\end{theorem}

\begin{proof}
Since $\eta$ is a homogeneous local representation of $TVB_n$, we may set
$$\eta(\sigma_1) =\left( \begin{array}{@{}c@{}}
  \begin{matrix}
   		a\ & b\ & 0\\
   		c\ & d\ & 0\\
   		0\ & 0\ & 1
   		\end{matrix}
\end{array} \right),\ \eta(\rho_1) =\left( \begin{array}{@{}c@{}}
 \begin{matrix}
   		w\ & x\ & 0\\
   		y\ & z\ & 0\\
   		0\ & 0\ & 1
   		\end{matrix}
\end{array} \right),\ \eta(\tau_1) =\left( \begin{array}{@{}c@{}}
 \begin{matrix}
   		f\ & g\ & 0\\
   		h\ & k\ & 0\\
   		0\ & 0\ & 1
   		\end{matrix}
\end{array} \right),$$
$$\eta(\gamma_1) =\left( \begin{array}{@{}c@{}}
  \begin{matrix}
   		p\ & q\ & 0\\
   		r\ & s\ & 0\\
   		0\ & 0\ & 1
   		\end{matrix}
\end{array} \right), \text{ and }\ \eta(\gamma_2) =\left( \begin{array}{@{}c@{}}
 \begin{matrix}
  		1\ & 0\ & 0\\
   		0\ & p\ & q\\
   		0\ & r\ & s
   		\end{matrix}
\end{array} \right),$$
where $a,b,c,d,w,x,y,z,f,g,h,k,p,q,r,s \in \mathbb{C}$ satifying the invertibility of matrices $\eta(\sigma_1), \eta(\rho_1), \eta(\gamma_1),$ and $\eta(\gamma_2)$. Again, according to the locality and homogeneity of the representation, it suffices to consider the following relations of the generators of $STVB_2$, and all other relations imply similar equations
$$\rho_1^2=1,$$
$$\gamma_1^2=\gamma_2^2=1,$$
$$\gamma_1\gamma_2=\gamma_2\gamma_1,$$
$$\rho_1\gamma_1=\gamma_2\rho_1,$$
$$\rho_1\sigma_1\rho_1=\gamma_2\gamma_1\sigma_1\gamma_1\gamma_2,$$
$$\sigma_1\tau_1=\tau_1\sigma_1,$$
$$\rho_1\tau_1\rho_1=\gamma_2\gamma_1\tau_1\gamma_1\gamma_2.$$
Applying these relations to the images of the generators of $STVB_2$ under the representation $\eta$ leads to a system of equations with sixteen unknowns. A similar proof as the proof of Theorem \ref{localTVB2} gives the desired result.
\end{proof}

Now, we study the faithfulness of the representations $\eta_i, 1\leq i \leq 13,$ given in Theorem \ref{localSTVB2}.

\begin{theorem}
Every representation of type $\eta_i, 1\leq i \leq 13,$ is unfaithful.
\end{theorem}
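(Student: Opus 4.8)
The plan is to show that each of the thirteen representation types $\eta_i$ fails to be injective by exhibiting, for each type, a pair of distinct elements of $STVB_2$ whose images under $\eta_i$ coincide. Since unfaithfulness only requires a single such collision per type, I would organize the proof by grouping the thirteen types according to which collision argument applies, exactly as was done in the proof that $\zeta_i$ are all unfaithful. The overarching strategy exploits two structural degeneracies that are visible from the explicit matrices in Theorem \ref{localSTVB2}: either the twist generators $\gamma_1,\gamma_2$ are sent to $I_3$, or the representation identifies two words that are genuinely distinct in the monoid.

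First I would dispose of the types where $\eta_i(\gamma_1)=\eta_i(\gamma_2)=I_3$, which is the case for $2 \le i \le 13$. For every such type, the elements $\gamma_1$ and $e$ are distinct in $STVB_2$ (since $\gamma_1$ has infinite order only formally, but more simply $\gamma_1 \neq e$ because $STVB_2$ maps onto $TVB_2$ where $\gamma_1$ is a nontrivial generator of order $2$), yet $\eta_i(\gamma_1)=I_3=\eta_i(e)$. This immediately kills injectivity for all twelve of these types. Thus the only genuinely separate case is $\eta_1$, where $\gamma_1$ and $\gamma_2$ are sent to the nontrivial diagonal involutions $\mathrm{diag}(-1,1,1)$ and $\mathrm{diag}(1,-1,1)$.

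For the type $\eta_1$, I would mimic the collision used for $\zeta_1$ in the earlier faithfulness theorem. The natural candidate is the pair $\sigma_1\rho_1$ and $\rho_1\sigma_1$: these are distinct in $STVB_2$, but a direct $2\times 2$ block computation using $\eta_1(\sigma_1)=\begin{pmatrix} a & b \\ b/x^2 & a \end{pmatrix}$ and $\eta_1(\rho_1)=\begin{pmatrix} 0 & x \\ 1/x & 0 \end{pmatrix}$ (restricting to the top-left block, since $e_3$ is fixed) should show $\eta_1(\sigma_1\rho_1)=\eta_1(\rho_1\sigma_1)$ because the symmetric form of $\eta_1(\sigma_1)$ is engineered to commute with $\eta_1(\rho_1)$ up to the relation. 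Alternatively, since $\eta_1$ involves the singular generator $\tau_1$ with the same symmetric shape as $\sigma_1$, I would look for a collision among words in $\sigma_1,\tau_1,\rho_1$; for instance $\tau_1\rho_1$ versus $\rho_1\tau_1$ may coincide for the same algebraic reason, and this has the advantage of directly involving a singular generator. I would verify whichever pair collides by the short block multiplication.

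The main obstacle I anticipate is confirming that the chosen colliding words really are \emph{distinct} in $STVB_2$, rather than being equal as monoid elements — the matrix identity is easy, but non-equality in the monoid requires knowing the word problem or at least a separating homomorphism. For the $\gamma_1 \neq e$ argument this is clear via the projection $STVB_2 \twoheadrightarrow TVB_2$ onto the twisted virtual braid group, where $\gamma_1$ is a standard generator and provably nontrivial. For the $\eta_1$ collision, I would justify distinctness of $\sigma_1\rho_1$ and $\rho_1\sigma_1$ by noting that the relations (2.1)--(2.22) do not include any relation equating $\sigma_1\rho_1$ with $\rho_1\sigma_1$, which can be confirmed by an invariant such as the permutation- or abelianization-type homomorphism distinguishing the two, precisely as in the $\zeta_1$ case. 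Once distinctness is secured, the unfaithfulness of every $\eta_i$ follows at once.
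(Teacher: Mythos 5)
Your proposal is correct and follows essentially the same route as the paper: the paper likewise dispatches $\eta_i$ for $2\leq i\leq 13$ by noting $\gamma_1,\gamma_2\mapsto I_3$ while $\gamma_1\neq e$, and handles $\eta_1$ via the collision $\eta_1(\sigma_1\rho_1)=\eta_1(\rho_1\sigma_1)$ for the distinct elements $\sigma_1\rho_1\neq\rho_1\sigma_1$. Your extra care in verifying the block multiplication and the distinctness of the colliding words only makes the argument more complete than the paper's one-line version.
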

\begin{proof}
$\eta_1$ is not faithful as $\sigma_1\rho_1 \neq \rho_1\sigma_1$ in $STVB_2$ but their images under $\eta_1$ are the same, and $\eta_i, 2\leq i \leq 13,$ are unfaithful since $\gamma_1$ and $\gamma_2$ map to the identity matrix.
\end{proof}

We now study the irreducibility of the representations $\eta_i, 1\leq i \leq 13,$ given in Theorem \ref{localSTVB2}.

\begin{theorem}
All the representations in Theorem 4.1 are reducible to a two-dimensional representation and the following statements hold true.
\begin{enumerate}
    \item[(i)] The representations $\eta_i$, $1\leq i \leq 4,$ are not further reducible to a one-dimensional representation.
    \item[(ii)] The representations $\eta_i$, $i=5,9,$ are further reducible to a one-dimensional representation if and only if $g=0$ and $f\neq k$.
    \item[(iii)] The representations $\eta_i$, $6\leq i \leq 13$ and $i \neq 9$, are further reducible to a one-dimensional representation.
\end{enumerate}
\end{theorem}
\begin{proof}
A similar proof as the proof  of Theorem 3.3 gives the required result.
\end{proof}

In this section, we determine the forms of all complex homogeneous local representations of $STVB_2$ and $STVG_2$, which leads us to ask the following question, for $n\geq 3$, for future work.

\begin{question}
Consider $n\geq 3$ and let $\eta': STVB_n \longrightarrow \mathrm{M}_{n+1}(\mathbb{C})$ be a complex homogeneous local representation of $STVB_n$. What are the possible forms of $\eta'$?
\end{question}

\section{$\Phi$-Type Extensions of Complex Local Representations of $TVB_2$ to $STVB_2$}

\vspace*{0.1cm}

In \cite{Nas20241}, Nasser studied the relation between local extensions and $\Phi$-type extensions of some local representations of the braid group $B_n$ to the singular braid monoid $SM_n$. In this section, we aim to make a road toward answering the following question: If $\phi$ is a complex homogeneous local representation of $TVB_n$ for $n\geq 2$, then, what is the relation between local extensions and $\Phi$-type extensions of $\phi$ to $STVB_n$?\vspace{0.1cm}

We consider $n=2$ and we answer the mentioned question for the most important complex local representation of $TVB_2$, which is $\zeta_1$, obtained in Theorem \ref{localTVB2}.\vspace{0.1cm}

Recall that every complex local extension of $\zeta_1$ to $STVB_2$ has the form of the representation $\eta_1$, obtained in Theorem \ref{localSTVB2}. \vspace{0.1cm}

Now, we find the form of all $\Phi$-type extensions of $\zeta_1$ to $STVB_2$.

\begin{theorem} \label{FiType}
Let $\zeta_1: TVB_2 \longrightarrow \mathrm{GL}_3(\mathbb{C})$ be the complex local representation of $TVB_2$ obtained in Theorem \ref{localTVB2}. Let $t,u,v \in \mathbb{C}$ and let $\Phi_{t,u,v}: STVB_2\longrightarrow \mathrm{M}_3(\mathbb{C})$ be a $\Phi$-type extension of $\zeta_1$. Then, $\Phi_{t,u,v}$ acts on the generators of $STVB_2$ as follows.
$$\Phi_{t,u,v}(\sigma_1)=\zeta_1(\sigma_1), \Phi_{t,u,v}(\rho_1)=\zeta_1(\rho_1), \Phi_{t,u,v}(\gamma_1)=\zeta_1(\gamma_1), \Phi_{t,u,v}(\gamma_2)=\zeta_1(\gamma_2) \text{ and}$$
$$\Phi_{t,u,v}(\tau_1)=\left(
\begin{array}{ccc}
d \left(\frac{u x^2}{d^2 x^2-b^2}+t\right)+v & b \left(\frac{u x^2}{b^2-d^2 x^2}+t\right) & 0 \\
b \left(\frac{u}{b^2-d^2 x^2}+\frac{t}{x^2}\right) & d \left(\frac{u x^2}{d^2 x^2- b^2}+t\right)+v & 0 \\
0 & 0 & t+u+v\\
\end{array} \right),$$ 
where $b,d,x \in \mathbb{C}, d^2x^2-b^2\neq 0, x\neq 0$.
\end{theorem}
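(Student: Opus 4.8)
The plan is to compute the matrix $\Phi_{t,u,v}(\tau_1)$ directly from the defining formula of a $\Phi$-type extension. By Proposition~\ref{Phi} (as extended to $TVB_n$ and $STVB_n$ in the excerpt), the extension is determined on $\tau_1$ by the rule
\[
\Phi_{t,u,v}(\tau_1) = t\,\zeta_1(\sigma_1) + u\,\zeta_1(\sigma_1^{-1}) + v\,I_3,
\]
where I have written $t,u,v$ in place of the $a,b,c$ of Proposition~\ref{Phi} to match the statement. Thus the entire content of the theorem is to evaluate this linear combination explicitly, using the form of $\zeta_1(\sigma_1)$ from Theorem~\ref{localTVB2}. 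The images of $\sigma_1,\rho_1,\gamma_1,\gamma_2$ are fixed to equal their $\zeta_1$-values by definition of a $\Phi$-type extension, so those four identities require no work.

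First I would record $\zeta_1(\sigma_1)$, namely the matrix with diagonal block $\left(\begin{smallmatrix} d & b \\ b/x^2 & d \end{smallmatrix}\right)$ and a trailing $1$. The main computational step is to invert this matrix. Since the $2\times 2$ block has determinant $d^2 - b^2/x^2 = (d^2x^2 - b^2)/x^2$, which is nonzero precisely under the stated hypothesis $d^2x^2 - b^2 \neq 0$, the inverse of the block is
\[
\frac{1}{d^2 - b^2/x^2}\begin{pmatrix} d & -b \\ -b/x^2 & d \end{pmatrix}
= \frac{x^2}{d^2x^2 - b^2}\begin{pmatrix} d & -b \\ -b/x^2 & d \end{pmatrix}.
\]
I would then form $t\,\zeta_1(\sigma_1) + u\,\zeta_1(\sigma_1^{-1}) + v I_3$ entrywise. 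Collecting the $(1,1)$ entry gives $td + u\cdot \frac{dx^2}{d^2x^2-b^2} + v$, which matches $d\!\left(\frac{ux^2}{d^2x^2-b^2}+t\right)+v$; the $(2,2)$ entry is identical by the symmetry of the block. The off-diagonal entries come out as $tb + u\cdot\frac{-bx^2}{d^2x^2-b^2}$ and $t\frac{b}{x^2} + u\cdot\frac{-b}{d^2x^2-b^2}$, which, after factoring out $b$ and rewriting $-1/(d^2x^2-b^2)$ as $1/(b^2-d^2x^2)$, reproduce the stated $(1,2)$ and $(2,1)$ entries. The $(3,3)$ entry is simply $t+u+v$, since the trailing block of both $\zeta_1(\sigma_1)$ and its inverse is $1$.

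There is essentially no hard part here: the statement is a bookkeeping identity, and the only place requiring care is the sign/denominator conventions, where $d^2x^2 - b^2$ and $b^2 - d^2x^2$ are used interchangeably up to sign to present each entry cleanly. I would double-check that the nonvanishing condition $d^2x^2 - b^2 \neq 0$ (together with $x \neq 0$) is exactly what is needed for $\zeta_1(\sigma_1)$ to be invertible, so that $\zeta_1(\sigma_1^{-1})$ is well-defined; this is consistent with the invertibility constraint $b^2 - d^2x^2 \neq 0$ imposed on $\zeta_1$ in Theorem~\ref{localTVB2}. Finally I would note that no verification of the $STVB_2$ relations is required, since the fact that $\Phi_{t,u,v}$ is a representation is guaranteed by the general construction recalled before this section; the theorem asserts only the explicit matrix form, which the computation above establishes.
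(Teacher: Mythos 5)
Your proposal is correct and follows essentially the same route as the paper: it invokes the defining rule $\Phi_{t,u,v}(\tau_1)=t\,\zeta_1(\sigma_1)+u\,\zeta_1(\sigma_1^{-1})+v I_3$, computes $\zeta_1(\sigma_1^{-1})$ by inverting the $2\times 2$ block, and simplifies the linear combination entrywise to the stated matrix. The entry-by-entry checks and the remark on the invertibility condition $d^2x^2-b^2\neq 0$ match the paper's computation.
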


\begin{proof}
First, we compute $\zeta_1(\sigma_1^{-1})$. From Theorem 3.1, We have 
$\zeta_1(\sigma_1) =\left( \begin{array}{@{}c@{}}
  \begin{matrix}
   		d & b & 0\\
   		\frac{b}{x^2} & d & 0\\
   		0 & 0 & 1
   		\end{matrix}
\end{array} \right),$ where $b,d,x \in \mathbb{C}, d^2x^2-b^2\neq 0, x\neq 0.$
This implies that
$$\zeta_1(\sigma_1^{-1})=\left(
\begin{array}{ccc}
 \frac{d}{d^2-\frac{b^2}{x^2}} & -\frac{b}{d^2-\frac{b^2}{x^2}} & 0 \\
 -\frac{b}{x^2 \left(d^2-\frac{b^2}{x^2}\right)} & \frac{d}{d^2-\frac{b^2}{x^2}} & 0 \\
 0 & 0 & 1 \\
\end{array}
\right).$$ 
Now, we have $\Phi_{t,u,v}(\tau_1)=t\zeta_1(\sigma_1)+u\zeta_1(\sigma_1^{-1})+vI_3,$ which implies that
$$\Phi_{t,u,v}(\tau_1)=t\left( \begin{array}{@{}c@{}}
  \begin{matrix}
   		d & b & 0\\
   		\frac{b}{x^2} & d & 0\\
   		0 & 0 & 1
   		\end{matrix}
\end{array} \right)+u\left(
\begin{array}{ccc}
 \frac{d}{d^2-\frac{b^2}{x^2}} & -\frac{b}{d^2-\frac{b^2}{x^2}} & 0 \\
 -\frac{b}{x^2 \left(d^2-\frac{b^2}{x^2}\right)} & \frac{d}{d^2-\frac{b^2}{x^2}} & 0 \\
 0 & 0 & 1 \\
\end{array}
\right)+vI_3$$
$\hspace*{2.05cm}\ \ \ \ =\left(
\begin{array}{ccc}
d \left(\frac{u x^2}{d^2 x^2-b^2}+t\right)+v & b \left(\frac{u x^2}{b^2-d^2 x^2}+t\right) & 0 \\
b \left(\frac{u}{b^2-d^2 x^2}+\frac{t}{x^2}\right) & d \left(\frac{u x^2}{d^2 x^2- b^2}+t\right)+v & 0 \\
0 & 0 & t+u+v\\
\end{array} \right).$

\end{proof}

As a consequence, the following corollary is to compare complex local extensions and $\Phi$-type extensions of the local representation $\zeta_1$ of $TVB_2$ to $STVB_2$.

\begin{corollary}
Let $\zeta_1: TVB_2 \longrightarrow \mathrm{GL}_3(\mathbb{C})$ be the complex local representation of $TVB_2$ obtained in Theorem \ref{localTVB2}. Consider the complex local extension, $\eta_1$, of $\zeta_1$, obtained in Theorem \ref{localSTVB2}, and the $\Phi$-type extension, $\Phi_{t,u,v}$, of $\zeta_1$, obtained in Theorem \ref{FiType}. Then
$$\eta_1(\tau_1)=\Phi_{t,u,v}(\tau_1)$$
if and only if
$$t=\frac{b^2 g+x^2 (b (f-1)-(d-1) d g)}{b^3-b (d-1)^2 x^2},$$
$$u=-\frac{(b-d x) (b+d x) (b (f-1)-d g+g)}{b^3-b (d-1)^2 x^2}$$
and
$$v=\frac{b^3 f-b^2 d g-b x^2 (d (d f-2)+f)+d \left(d^2-1\right) g x^2}{b^3-b (d-1)^2 x^2}.$$
\end{corollary}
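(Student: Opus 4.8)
The plan is to treat the identity $\eta_1(\tau_1) = \Phi_{t,u,v}(\tau_1)$ as a system of scalar equations obtained by comparing entries, and to solve it for $(t,u,v)$. Both matrices are block-diagonal with a trivial action on $e_3$, so the $(3,3)$ entries give immediately $t+u+v = 1$ (the $(3,3)$ entry of $\eta_1(\tau_1)$ is $1$, while that of $\Phi_{t,u,v}(\tau_1)$ is $t+u+v$). For the upper $2\times 2$ block I would first record the structural observation that both $\eta_1(\tau_1)$ and $\Phi_{t,u,v}(\tau_1)$ have equal diagonal entries and satisfy $(\text{entry})_{2,1} = \tfrac{1}{x^2}(\text{entry})_{1,2}$: this is visible for $\eta_1(\tau_1)$ from its very form, and for $\Phi_{t,u,v}(\tau_1)$ by a one-line check, since $\zeta_1(\sigma_1)$, $\zeta_1(\sigma_1^{-1})$, and $I_3$ all share this shape. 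Consequently the four upper-block entries collapse to exactly two independent equations, coming from the $(1,1)$ and $(1,2)$ positions, and we are left with a genuine $3\times 3$ system rather than an overdetermined one.

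First I would assemble these three equations into a linear system in $t,u,v$. Abbreviating $\Delta = d^2x^2 - b^2$ and using $b^2 - d^2x^2 = -\Delta$ in the $(1,2)$ entry, the $(1,1)$, $(1,2)$, and $(3,3)$ comparisons read
\begin{align*}
d\,t + \tfrac{dx^2}{\Delta}\,u + v &= f,\\
b\,t - \tfrac{bx^2}{\Delta}\,u &= g,\\
t + u + v &= 1,
\end{align*}
so the coefficient matrix is explicit. The crux of the backward implication is that this system has a unique solution, which I would establish by a cofactor expansion along the third column: the determinant equals $\frac{b\big((d-1)^2x^2 - b^2\big)}{\Delta}$, which is nonzero precisely when the denominator $b^3 - b(d-1)^2x^2$ of the claimed formulas is nonzero. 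Under this nondegeneracy, Cramer's rule yields $t,u,v$, and the factor $\Delta$ appearing in each numerator cancels against the $\Delta$ in the determinant, leaving the common denominator $b^3 - b(d-1)^2x^2$ stated in the corollary.

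I expect the only real work to be the algebraic simplification of the three Cramer numerators into the stated closed forms; for instance, the numerator for $t$ simplifies to $b^2g + x^2\big(b(f-1) - d(d-1)g\big)$ after substituting $\Delta = d^2x^2 - b^2$, and those for $u$ and $v$ follow analogously, with $v$ most easily recovered from $v = 1 - t - u$. The forward implication is then immediate: substituting the displayed values into $\Phi_{t,u,v}(\tau_1) = t\,\zeta_1(\sigma_1) + u\,\zeta_1(\sigma_1^{-1}) + vI_3$ and simplifying reproduces $\eta_1(\tau_1)$. The main obstacle is therefore bookkeeping rather than conceptual: one must keep the sign of $\Delta$ straight throughout and carry out the polynomial simplifications carefully so that the factor $(b-dx)(b+dx) = -\Delta$ emerges cleanly in the formula for $u$.
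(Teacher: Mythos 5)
Your proposal is correct and follows the same route the paper implicitly takes (the corollary is stated there without a written proof): equate the entries of $\eta_1(\tau_1)$ and $\Phi_{t,u,v}(\tau_1)$, note that only the $(1,1)$, $(1,2)$ and $(3,3)$ comparisons are independent, and solve the resulting $3\times 3$ linear system; your determinant $b\bigl((d-1)^2x^2-b^2\bigr)/(d^2x^2-b^2)$ and the Cramer computations do reproduce the stated $t$, $u$, $v$. The only point worth flagging is that unique solvability requires $b^3-b(d-1)^2x^2\neq 0$, a nondegeneracy condition the corollary leaves implicit in its denominators and which you correctly identify.
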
 
 
\vspace*{0.1cm}

As a result of this section, we obtain that not every complex local extension of a complex
local representation of $TVB_2$ to $STVB_2$ has the $\Phi$-type extension form.

This motivates the following open questions for future investigation.

\begin{question}
For $n\geq 2$, let $\phi$ be a representation of $TVB_n$. What is the precise relationship between local extensions and $\Phi$-type extensions of $\phi$ to $STVB_n$?
\end{question}

\begin{question}
For $n\geq 2$, does there exist a general extension form of a representation $\phi$ of $TVB_n$ to $STVB_n$ other than the known complex local extension and $\Phi$-type extension forms?
\end{question}
 
\section{Conclusion}

\vspace*{0.1cm}

This study opens several avenues for future research, particularly concerning the extension of known braid group representations to the twisted singular virtual braid groups. Building upon Bardakov's work in \cite{Bar2024} regarding extensions of the Burau representation to $SM_n$, a key direction is to investigate analogous extensions to $STVB_n$ for all $n \geq 2$. This would necessitate carefully defining images for the $\rho$ and $\gamma$ generators such that the relations of $STVB_n$ are preserved. A crucial subsequent question is whether every such extension of the Burau representation to $STVB_n$ adheres to the $\Phi$-type form observed in Bardakov's findings.

Similarly, inspired by Bardakov's classification of extensions of the Lawrence-Krammer-Bigelow Representation (LKBR) of $B_n$ to $SM_n$ for $n=3,4$, future work can explore corresponding extensions to $STVB_n$. This would again involve the meticulous definition of images for the $\rho$ and $\gamma$ generators. A significant line of inquiry would then be to determine if all extensions of the LKBR to $STVB_n$ for $n=3,4$ also conform to the $\Phi$-type structure established in Bardakov's research. Addressing these questions would significantly deepen our understanding of the representation theory of twisted singular virtual braid groups.

\section{Acknowledgment}
The second author acknowledges the support of the University Grants Commission (UGC), India, for a research fellowship with NTA Ref. No.231610035955 . The third author acknowledges the support of the Anusandhan National Research Foundation (ANRF) with sanction order no. CRG/2023/004921, and the National
Board of Higher Mathematics (NBHM), Government of India, under grant-in-aid with F.No. 02011/2/20223 NBHM(R.P.)/ R\&D II/970.






\begin{thebibliography}{999}

\bibitem{artin1946} E. Artin, \emph{Theorie der Zöpfe}, {Abhandlungen Hamburg}, 4 (1925), 47--72.

\bibitem{Baez1992} J. Baez, \emph{Link invariant of finite type and perturbation theory}, {Lett. Math. Phys.}, 26 (1992), 43--51.

\bibitem{Bar2024} V. Bardakov, N. Chbili, and T. Kozlovskaya, \emph{Extensions of braid group representations to the monoid of singular braids}, {Mediterr. J. Math.}, 21 (2024), 180.

\bibitem{Big1999} S. Bigelow, \emph{The Burau representation is not faithful for $n=5$}, {Geometry \& Topology}, 3 (1999), 397--404.

\bibitem{Big2001} S. Bigelow, \emph{Braid groups are linear}, {J. Amer. Math. Soc.}, 14 (2000), 471--486.

\bibitem{Bir1975} J. Birman, \emph{Braids, links and mapping class groups}, {Annals of Mathematical Studies}, 82 (1974), Princeton University Press.

\bibitem{Birman1993} J. Birman, \emph{New point of view in knot theory}, {Bull. Amer. Math. Soc.}, 28 (1993), 253--287.

\bibitem{Bur1936} W. Burau, \emph{Braids, Uber Zopfgruppen and gleichsinnig verdrillte verkettungen}, {Abh. Math. Semin. Hamburg Univ.}, 11 (1936), 179--186.

\bibitem{chreif2024} M. Chreif and M. Dally, \emph{On the irreducibility of local representations of the braid group $B_n$}, {Arab. J. Math.}, 13 (2024), 263--273.

\bibitem{Roger1998} R. Fenn, E. Keyman, and C. Rourke, \emph{The singular braid monoid embeds in a group}, {J. Knot Theory \& Ramifications}, 7 (1998), 881--892.

\bibitem{For1996} E. Formanek, \emph{Braid group representations of low degree}, {Proc. London Math. Soc.}, 73 (1996), 279--322.

\bibitem{KauffmanLambropoulou2004} L. Kauffman and S. Lambropoulou, \emph{Virtual braids}, {Fund. Math.}, 184 (2004), 159--186.

\bibitem{Kram2002} D. Krammer, \emph{Braid groups are linear}, {Annals Math.}, 155 (2002), 131--156.

\bibitem{Law1990} R. Lawrence, \emph{Homological representations of the Hecke algebra}, {Comm. Math. Phys.}, 135 (1990), 141--191.

\bibitem{Lev2010} C. Levaillant and D. Wales, \emph{Parameters for which the Lawrence--Krammer representation is reducible}, {Journal of Algebra}, 323 (2010), 1966--1982.

\bibitem{Long1992} D. Long and M. Paton, \emph{The Burau representation of the braid group $B_n$ is not faithful for $n\geq 6$}, {Topology}, 32 (1992), 439--447.

\bibitem{Mik2013} Y. Mikhalchishina, \emph{Local representations of braid groups}, {Sib. Math. J.}, 54 (2013), 666--678.

\bibitem{Moo1991} J. Moody, \emph{The Burau representation of the braid group $B_n$ is not faithful for large $n$}, {Bull. Amer. Math. Soc.}, 25 (1991), 379--384.

\bibitem{May2025} T. Mayassi and M. Nasser, \emph{Classification of homogeneous local representations of the singular braid monoid}, {Arab. J. Math.}, 15 (2026), 307--329.

\bibitem{Nas2024} M. Nasser, \emph{On the faithfulness of a family of representations of the singular braid monoid $SM_n$}, {Beitr. Algebra Geom.}, (2025), 1--10.

\bibitem{Nas20241} M. Nasser, \emph{Local extensions and $\Phi$-type extensions of some local representations of the braid group $B_n$ to the singular braid monoid $SM_n$}, {Vietnam Journal of Mathematics}, (2024), 1--12.

\bibitem{NASSER2025} M. Nasser, \emph{On extensions of the standard representation of the braid group to the singular braid group}, arXiv:2504.09256v1.
 
\bibitem{NegiPrabhakarKamada2023} K. Negi, M. Prabhakar, and S. Kamada, \emph{Twisted virtual braids and twisted links}, arXiv:2302.13244v2.

\bibitem{Paris2001} L. Paris, \emph{The proof of Birman's conjecture on singular braid monoids}, {Geometry and Topology}, 8 (2004), 1281--1300.

\bibitem{Pra2025} M. Prabhakar and K. Negi, \emph{The monoid structure of singular twisted virtual braids}, arXiv:2502.08965v1.

\bibitem{Vinberg} E. Vinberg, \emph{Linear representations of groups}, Translated from the Russian by A. Iacob, Birkh{\"a}user (1989).

\end{thebibliography}
\end{document}